\newtheorem{theorem}{Theorem}[section]
\newtheorem{lemma}[theorem]{Lemma}
\newtheorem{proposition}[theorem]{Proposition}
\theoremstyle{definition}
\newtheorem{definition}[theorem]{Definition}
\newtheorem{example}[theorem]{Example}
\theoremstyle{remark}
\newtheorem{remark}[theorem]{Remark}
\newtheorem{corollary}[theorem]{Corollary}
\newcommand{\norm}[1]{\left\lVert#1\right\rVert}
\def\EE{\mathbb{E}}
\def\PP{\mathbb{P}}
\def\NN{\mathbb{N}}
\def\RR{\mathbb{R}}
\begin{document}

\title[An abstract effective convergence theorem for stochastic processes]{An abstract effective convergence theorem for stochastic processes, with applications to stochastic approximation}

\author[M. Neri, N. Pischke, T. Powell]{Morenikeji Neri${}^{\MakeLowercase a}$, Nicholas Pischke${}^{\MakeLowercase b}$, Thomas Powell${}^{\MakeLowercase b}$}
\date{\today}
\maketitle
\vspace*{-5mm}
\begin{center}
{\scriptsize 
${}^a$ Department of Mathematics, Technische Universit\"at Darmstadt,\\
Schlossgartenstra\ss{}e 7, 64289 Darmstadt, Germany,\\ 
${}^b$ Department of Computer Science, University of Bath,\\
Claverton Down, Bath, BA2 7AY, United Kingdom,\\
E-mails: neri@mathematik.tu-darmstadt.de, \{nnp39,trjp20\}@bath.ac.uk}
\end{center}

\maketitle
\begin{abstract}
We provide a general theorem on the asymptotic behavior of stochastic processes that conform to a relaxed supermartingale condition. The distinguishing feature of our result is that it provides quantitative convergence guarantees at a much higher level of abstraction and generality than is typically seen in the stochastic approximation literature, formulated in particular in terms of a general modulus $\tau$ that, on an intuitive level, captures an effective variant of the \emph{uniqueness in expectation} of associated solutions. Our convergence rate is highly uniform, depending on very few data beyond $\tau$. We then demonstrate the utility of our result as a unifying framework by deriving new quantitative versions of several key concepts and theorems from stochastic approximation, including the Robbins-Siegmund theorem, Dvoretzky's convergence theorem, and the convergence of stochastic quasi-Fej\'er monotone sequences, the latter formulated in a novel and highly general metric context. Throughout, we isolate and discuss special cases of our results which allow for the construction of fast, and in particular linear, rates. Various applications of our results and our general methodology to stochastic approximation are discussed, and in particular explicitly derived in related work of the authors.
\end{abstract}
\noindent
{\bf Keywords:} Stochastic processes, rates of convergence, stochastic approximation, proof mining\\ 
{\bf MSC2020 Classification:} 62L20, 90C15, 60G42, 03F10

\section{Introduction}

Stochastic approximation methods are fundamental to modern science, with applications in optimization, statistics, machine learning, control theory, and many other areas. Accordingly, a vast array of methods have been studied across different settings and under various conditions, resulting in thousands of convergence proofs for stochastic algorithms. Underneath those many proofs lie a relatively small number of powerful techniques. and in can be extremely useful to isolate those techniques in the form of abstract, unified convergence theorems. Such results are significant in that they help us better understand the relationship between different stochastic algorithms and can therefore guide the development of new methods. Even more interestingly, \emph{strengthenings} of abstract theorems with, for example, quantitative information, propagate down to the many concrete methods whose convergence has been established using those theorems, providing new insights and results for both well-known and newly discovered stochastic algorithms.

Our main result is a quantitative convergence theorem of this kind (cf.\ Theorem \ref{thm:RatesGeneral}). It is intended as a modern, widely applicable supermartingale-type result, its characterising feature being the provision of explicit convergence rates at a high level of abstraction and generality. We show that our general theorem unifies several archetypal proof strategies from stochastic approximation, including classic supermartingale convergence (as embodied by the famous Robbins-Siegmund theorem \cite{robbins-siegmund:71:lemma}) under a stochastic regularity assumption (cf.\ Theorem \ref{thm:RS}), Dvoretzky's convergence theorem \cite{dvoretsky:56:stochastic} (cf.\ Theorem \ref{thm:quantDvo}), and the convergence of stochastic quasi-Fej\'er monotone sequences \cite{CP2015,Erm1969} for general metric spaces under a (stochastic) uniqueness assumption (cf.\ Theorem \ref{thm:Fejer}). While several of these results are qualitatively new, our main contribution rests on the quantitative convergence guarantees, both in mean and almost surely. Broadly speaking, these represent the most general effective rates possible for stochastic approximation problems with unique solutions (as discussed in more detail at the beginning of Section \ref{sec:fejer}), and go far beyond the reach of typical (and more ad-hoc) constructions of \emph{fast} (e.g.\ linear) rates as commonly given in the literature, which inevitably require much stronger assumptions on the problem and method. Nevertheless, we highlight how our framework can be readily refined to also guarantee such fast rates, still very generally and abstractly, in special cases that encompass those usually considered in the literature, and can thus be used to extend the reach of sharp estimates to new methods (cf.\ Theorems \ref{thm:RSfast} and \ref{thm:fejerFast}).

Our overall aim is to provide a general framework for obtaining quantitative convergence guarantees for stochastic approximation methods in settings where corresponding solutions are unique in a stochastic sense, one which is broadly applicable whenever convergence can be established using classic descent-based methods. Many immediate applications of our results are then possible, where by analysing existing convergence proofs, one can obtain either new or generalised convergence rates for known methods: A simple illustration of this can be found in Section \ref{sec:RM}, where we provide an abstract, quantitative convergence theorem for the Robbins-Monro algorithm, covering $\tau$-uniformly monotone operators in Hilbert spaces. 

However, the most effective use of our abstract theorems will be to establish novel convergence results for new methods under stochastic uniqueness assumptions. Indeed, this has already been exemplified in \cite{Pischke2025b,Pischke2026}, where the second author has applied the results given below to new algorithms. Concretely, \cite{Pischke2025b} extends a stochastic variant of the proximal point algorithm due to Bianchi \cite{Bia2016} to the general metric context of Hadamard spaces \cite{BH1999}, while \cite{Pischke2026} extends the recently introduced Busemann subgradient method of Goodwin, Lewis, L\'opez-Acedo and Nicolae \cite{GoodwinLewisLopezAcedoNicolae2026} to general stochastic minimization problems. In both cases, the associated results are already qualitatively novel and, moreover, quantitative convergence theorems for these methods under stochastic uniqueness assumptions are given (including fast rates), making heavy use of the methodology developed in this paper, in particular our formulation of stochastic quasi-Fej\'er monotonicity in general metric spaces. Further applications of the present methodology are given in the related work  \cite{PischkePowell2026}, as discussed further in Section \ref{sec:relatedWork} below (as well as Section \ref{sec:fejer}).

Many more applications of this kind are possible: for example to stochastic splitting methods for monotone inclusions (see e.g.\  \cite{CP2016,NVP2024,RVV2016}), all of which might potentially be lifted to broader geometric contexts; or applications that outright apply to more sophisticated geometric settings such as gradient descent-type algorithms on Riemannian manifolds (e.g.\ \cite{ACPY2012,Bon2013,TFBJ2018}). Beyond those, an indicative sample of the vast range of methods that utilise the Robbins-Siegmund theorem is provided in the survey article \cite{franci-grammatico:convergence:survey:22}, many of which are immediately open to generalisation using our results. Outside of concrete applications in stochastic optimization, one could also extend the present quantitative approach to one of the various unified convergence theorems already known in the literature, such as those found in the recent work \cite{LM2022}, or already in the older work \cite{JJS1994} (with the latter of particular note as it was originally conceived, and continues to be used, to provide a rigorous convergence proof for dynamic programming based learning algorithms). Further structures and concepts that could be captured through our abstract approach include coupled supermartingale convergence theorems (cf.\ \cite{WB2016,WFL2017}).

Finally, we suggest that our unifying perspective might also be relevant given the increasingly widespread (and sophisticated) use of computers in mathematics \cite{Tao2024}. Generalised convergence theorems have been particular targets for mechanisation in proof assistants (see e.g.\ the formalisation of Dvoretzky's theorem in \cite{VTSP2022}), and we conjecture that the abstract, quantitative results, if implemented within a proof assistant, would allow us to automatically generate convergence rates from suitably formalised convergence proofs. 

\subsection{Overview of the main results}

We now give a more technical overview of our main results. Consider a general nonnegative real-valued stochastic processes $(X_n)$ that satisfies a standard almost-supermartingale property:
\[
\EE[X_{n+1}\mid \mathcal{F}_n]\leq (1+A_n)X_{n}+C_n \text{ almost surely for all }n\in\mathbb{N},
\]
relative to a filtration $(\mathcal{F}_n)$ and nonnegative stochastic processes $(A_n),(C_n)$. As usual, the intuitive idea is that $X_n$ represents the distance between the element $x_n$ of a stochastic algorithm and some target point $z$, with the perturbation $1+A_n$ and error term $C_n$ arising in some way from stochastic noise. Together with natural conditions on the errors and the perturbations, we provide a convergence result for such processes, both in expectation and almost surely, in the presence of an additional approximation assumption
\[
\liminf_{n\to\infty}\EE[f(X_n)]=0.
\]
Here $f$ belongs to a class of well-behaved functions (defined for the first time in this paper) that \emph{slow down} the process $X_n$, and hence substantially weaken the stronger assumption $\liminf_{n\to\infty}\EE[X_n]=0$ that is typically required to establish convergence of $X_n$ towards zero (and hence of $x_n$ towards $z$). Our ability to slow down the process in this way is crucial for the applications that follow and one of the features that gives our main result its breadth.

The proof of this general result is rather elementary, relying only on fundamental properties of martingales and conditional expectations (notably Ville's inequality for nonnegative supermatingales). However, the result comes equipped with quantitative information that not only guarantees the convergence to zero of $f(X_n)$ in mean and of $X_n$ almost surely, but also provides rates in both cases in the form of explicitly and effectively constructed functions $\rho:(0,\infty)\to \mathbb{N}$ with
\[
\forall \varepsilon>0\ \forall n\geq \rho(\varepsilon)\left( \EE[f(X_n)]<\varepsilon\right)
\]
and $\rho':(0,\infty)^2\to\mathbb{N}$ with 
\[
\forall \lambda,\varepsilon>0\left( \PP(\exists n\geq \rho'(\lambda,\varepsilon)(X_n\geq \varepsilon))<\lambda\right),
\]
respectively. While rather abstract in this general formulation, in many concrete cases these functions will be immediately equivalent to ordinary (non-)asymptotic guarantees on $(X_n)$, as supplied e.g.\ also explicitly in our results on fast rates.

The motivation behind our approximation property for $f(X_n)$ is the observation from practice that in many concrete scenarios, this property (for suitable $f$) comes into existence through a decomposition into a similar property $\liminf_{n\to\infty}\EE[V_n]=0$ for a secondary process $(V_n)$ (representing an approximate solution property), which in many cases can be naturally and easily obtained, together with a type of regularity assumption relating $\EE[V_n]$ to $\EE[f(X_n)]$, which in this paper will be concretely represented through a modulus $\tau:(0,\infty)\to (0,\infty)$ satisfying
\[
\forall n\in\mathbb{N}\ \forall\varepsilon>0\left( \EE[V_n]<\tau(\varepsilon)\to \EE[f(X_n)]<\varepsilon\right).
\]
As we will argue, this modulus $\tau$ represents a powerful abstraction of many stochastic regularity or uniqueness assumptions from the literature.

Going from the abstract to the concrete, we then illustrate that the three distinct (but conceptually overlapping) approaches to convergence of stochastic approximation methods as discussed above, that is supermartingale convergence in the form of the Robbins-Siegmund theorem, Dvoretzky's convergence theorem and convergence of stochastic quasi-Fej\'er monotone sequences, are all (in respective quantitative variants) consequences of our abstract theorem, where the regularity moduli as detailed above naturally manifest themselves through commonly made assumptions in all these cases. This provides not only a uniform proof strategy for these three central approaches to stochastic approximation, but furthermore allows us to transfer our quantitative results to these specialised settings.

\subsection{Related work}\label{sec:relatedWork}

The results in this paper are partly inspired by the modern appreciation of Fej\'er monotonicity as one of the key unifying tools in fixed point theory and optimization, as especially due to Bauschke and Combettes \cite{BC2017,Combettes2001,Com2009} for deterministic methods, and in particular also Combettes and Pesquet \cite{CP2015,CP2019} in a stochastic setting.

Our paper is further inspired by the quantitative study of Fej\'er monotone sequences in the presence of general notions of regularity given in a deterministic setting by Kohlenbach, L\'opez-Acedo and Nicolae \cite{KLAN2019} (see also \cite{Pis2023,Pischke2025a}). In that context, our results can in particular be seen as a (partial) stochastic variant of the work \cite{KLAN2019}, focusing on uniqueness as a special case of the regularity notion studied therein. A study of stochastic variants of the broader regularity notion introduced in \cite{KLAN2019} is given in the work \cite{PischkePowell2026} by the second and third author (as will be discussed on a more technical level in Section \ref{sec:fejer} later on). In this regard, the present paper can be viewed as a companion article to \cite{PischkePowell2026}: Here, we work on a higher level of abstraction and consider multiple ways in which supermartingale-type convergence results can be formulated, whereas \cite{PischkePowell2026} restricts its attention to (slightly more restrictive variants of) stochastic quasi-Fej\'er monotonicity, for that under a much more liberal notion of regularity than the one investigated here. This however requires additional machinery, including careful implementations of results from measurable selection theory, which can be avoided under this paper's focus on uniqueness.

Finally, we highlight that the present paper is orthogonal to related work of the the first and third author \cite{NeriPowell2024,NeriPowell2026} on quantitative results for the martingale convergence and Robbins-Siegmund theorem as well as of the authors \cite{NeriPischkePowell2026} on similar such results for stochastic quasi-Fej\'er monotone processes in general metric settings (without any type of regularity assumption).\footnote{The results from \cite{NeriPowell2024,NeriPowell2026,NeriPischkePowell2026}, likewise the related works \cite{KLAN2019,Pis2023,Pischke2025a} on quantitative aspects of Fej\'er monotonicity in a deterministic setting, as well as the present work and its derived works \cite{Pischke2025b,Pischke2026,PischkePowell2026}, have been obtained using the logic-based methodology of proof mining \cite{Koh2008,Koh2019}. The probabilistic works are part of a series of recent applications of these methods to probability theory and stochastic optimization \cite{NeriOlivaPischke2026,NeriPischke2024}. Here, for example, this logical approach in particular influenced the way in which we formulated the various (stochastic) moduli. All the results and proofs given in this work are formulated in a way which avoids any reference to mathematical logic.} All of these only derive weaker oscillation-type bounds (or more generally so-called rates of metastability), which are the best type of quantitative results possible without making some type of regularity assumption as we do here. As such, the present paper  (as well as the work \cite{PischkePowell2026}) complements these works by showing that under such a regularity modulus, corresponding ``full'' rates of convergence can be explicitly and effectively defined in a stochastic context, and moreover, as mentioned before and as also discussed later in Section \ref{sec:fejer} (see also \cite{PischkePowell2026}), such general regularity moduli are essentially optimal.

\subsection*{Notation} For the remainder of the paper, we fix a probability space $(\Omega,\mathcal{F},\PP)$. We understand all measure-theoretic notions, such as random variables, (conditional) expectations $\EE$ and almost-sureness (which we abbreviate by a.s.), to be defined relative to it. All random variables, unless specified otherwise, are assume to be real-valued and (in-)equalities between random variables, if not specified otherwise, are understood to hold almost surely.

\section{A general theorem on rates of convergence for stochastic processes}\label{sec:genRates}

We begin with our general convergence theorem for stochastic processes that satisfy an almost-supermartingale condition. Crucially, this theorem is not just concerned with plain convergence, but also provides a rate of convergence for the associated process, both in expectation and almost surely. The theorem will then be instantiated in the subsequent sections to yield various applications in stochastic approximation, all of which inherit the convergence rates formulated here. 

We first discuss the main ingredients we require, the first of which is a well-known concentration inequality for nonnegative supermartingales due to Ville \cite{Ville1939} (see also \cite{Metivier1982} for a detailed overview of concentration inequalities for martingales):

\begin{lemma}\label{ville}
Let $(U_n)$ be a nonnegative supermartingale. Then for any $a>0$ we have
\[
\PP\left(\sup_{n\in\NN} U_n\geq a\right)\leq \frac{\EE[U_0]}{a}.
\]
\end{lemma}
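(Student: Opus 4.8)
The plan is to run the classical first-passage argument: stop the process the first time it reaches level $a$, use that a supermartingale stopped at a deterministic time has expectation at most its initial expectation, apply Markov's inequality on the stopping event, and then pass to a monotone limit in the horizon. The point of stopping is precisely to avoid imposing any integrability condition on $(U_n)$ beyond what the supermartingale property already provides.

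Concretely, fix $a>0$ and let $\tau:=\inf\{n\in\NN : U_n\geq a\}$, with the convention $\inf\emptyset=\infty$; this is a stopping time with respect to the underlying filtration $(\mathcal{F}_n)$. For each deterministic $N\in\NN$, the stopped process $(U_{n\wedge\tau})_{n\in\NN}$ is again a nonnegative supermartingale: since $U_{(n+1)\wedge\tau}-U_{n\wedge\tau}=\mathbf{1}_{\{\tau>n\}}(U_{n+1}-U_n)$ with $\{\tau>n\}\in\mathcal{F}_n$, one has $\EE[U_{(n+1)\wedge\tau}\mid\mathcal{F}_n]=U_{n\wedge\tau}+\mathbf{1}_{\{\tau>n\}}\EE[U_{n+1}-U_n\mid\mathcal{F}_n]\leq U_{n\wedge\tau}$, using that $\EE[U_{n+1}-U_n\mid\mathcal{F}_n]\leq 0$ and $\mathbf{1}_{\{\tau>n\}}\geq 0$. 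Hence $\EE[U_{N\wedge\tau}]\leq\EE[U_{0\wedge\tau}]=\EE[U_0]$.

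Next, on $\{\tau\leq N\}$ we have $U_{N\wedge\tau}=U_\tau\geq a$ by the definition of $\tau$, while $U_{N\wedge\tau}\geq 0$ everywhere by nonnegativity. Therefore
\[
a\,\PP(\tau\leq N)\leq\EE\!\left[U_{N\wedge\tau}\,\mathbf{1}_{\{\tau\leq N\}}\right]\leq\EE[U_{N\wedge\tau}]\leq\EE[U_0],
\]
that is, $\PP(\tau\leq N)\leq\EE[U_0]/a$ for every $N\in\NN$. Since $\{\tau\leq N\}\uparrow\{\tau<\infty\}$ as $N\to\infty$, continuity of $\PP$ from below gives $\PP(\tau<\infty)=\PP(\exists n\in\NN\,(U_n\geq a))\leq\EE[U_0]/a$.

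It then remains only to pass from $\PP(\exists n\,(U_n\geq a))$ to $\PP(\sup_n U_n\geq a)$, which differ merely on paths where the supremum is approached but not attained: since $\{\sup_n U_n\geq a\}\subseteq\{\exists n\,(U_n\geq a')\}$ for every $a'\in(0,a)$, the bound just established at level $a'$ yields $\PP(\sup_n U_n\geq a)\leq\EE[U_0]/a'$, and letting $a'\uparrow a$ completes the argument. I do not expect any genuine obstacle here, this being a textbook inequality; the only points requiring a little care are the verification that the stopped process remains a supermartingale (so that optional stopping is legitimate over the bounded horizons $n\wedge\tau$, with no uniform-integrability hypothesis on $(U_n)$ itself) and the minor measure-theoretic subtlety about $\sup$ versus $\max$ just addressed. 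Alternatively, one could bypass the stopping-time bookkeeping by invoking Doob's maximal inequality for nonnegative supermartingales, $\PP(\max_{n\leq N}U_n\geq a)\leq\EE[U_0]/a$, and letting $N\to\infty$ in the same way.
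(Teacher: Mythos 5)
Your proof is correct and complete: the verification that the stopped process remains a supermartingale, the bound $a\,\PP(\tau\leq N)\leq\EE[U_0]$ via nonnegativity, the monotone limit in $N$, and the final passage from $\{\exists n\,(U_n\geq a')\}$ to $\{\sup_n U_n\geq a\}$ by letting $a'\uparrow a$ are all handled properly. The paper itself gives no proof of this lemma --- it is quoted as a known concentration inequality with a citation to Ville --- so there is nothing to compare against; your argument is the standard optional-stopping proof one would find in a textbook, and it suffices.
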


Further, we will rely on Jensen's inequality for (conditional) expectations (the following formulation of which is taken from \cite{Kle2020}, see Theorems 7.9 and 8.20 therein):

\begin{lemma}\label{jensen}
Let $\mathcal{G}$ be a sub-$\sigma$-algebra of $\mathcal{F}$ and let $X$ be a nonnegative integrable random variable. Let $\varphi:[0,\infty)\to[0,\infty)$ be a measurable function. If $\varphi$ is convex, then
\[
\varphi(\EE[X\mid\mathcal{G}])\leq \EE[\varphi(X)\mid\mathcal{G}]\text{ a.s.}\text{ and }\varphi(\EE[X])\leq\EE[\varphi(X)],
\]
where if $\varphi$ is concave, then
\[
\varphi(\EE[X\mid\mathcal{G}])\geq \EE[\varphi(X)\mid\mathcal{G}]\text{ a.s.}\text{ and }\varphi(\EE[X])\geq\EE[\varphi(X)].
\]
\end{lemma}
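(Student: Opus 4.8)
The plan is to run the classical supporting-line argument, keeping every family that appears countable so that the resulting almost-sure inequalities can be combined, and treating the boundary point of the domain by hand. First I would observe two reductions: the unconditional inequalities are the special case $\mathcal{G}=\{\emptyset,\Omega\}$ of the conditional ones, so it suffices to prove the latter; and the concave statements follow from the convex ones applied to $-\varphi$, so it suffices to treat convex $\varphi$.

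For convex $\varphi$, I would first record that at each interior point $q\in(0,\infty)$ there is a supporting line $\ell_q(x)=a_qx+b_q$ with $\ell_q\leq\varphi$ on $[0,\infty)$ and $\ell_q(q)=\varphi(q)$, and that letting $q$ range over $\QQ\cap(0,\infty)$ yields a countable family $(\ell_k)_{k\in\NN}$ with $\sup_k\ell_k(x)=\varphi(x)$ for every $x>0$ (here one uses continuity of $\varphi$ on the interior of its domain). For each fixed $k$, the pointwise bound $\varphi(X)\geq a_kX+b_k$ together with monotonicity and linearity of the conditional expectation — and integrability of $X$, which makes $a_kX+b_k$ integrable — gives $\EE[\varphi(X)\mid\mathcal{G}]\geq a_k\EE[X\mid\mathcal{G}]+b_k=\ell_k(\EE[X\mid\mathcal{G}])$ almost surely, where $\EE[\varphi(X)\mid\mathcal{G}]$ is well-defined with values in $[0,\infty]$ since $\varphi(X)\geq 0$. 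Intersecting these countably many almost-sure events and taking the supremum over $k$ then yields $\EE[\varphi(X)\mid\mathcal{G}]\geq\varphi(\EE[X\mid\mathcal{G}])$ almost surely on the event $\{\EE[X\mid\mathcal{G}]>0\}$. On the complementary event $N=\{\EE[X\mid\mathcal{G}]=0\}$ I would argue directly: since $X\geq 0$ and $N\in\mathcal{G}$ one has $X=0$ almost surely on $N$, hence $\varphi(X)=\varphi(0)$ almost surely on $N$, and so $\EE[\varphi(X)\mid\mathcal{G}]=\varphi(0)=\varphi(\EE[X\mid\mathcal{G}])$ almost surely on $N$. Combining the two cases finishes the convex case; the concave case then follows from the reduction above, the only additional point being that concavity of $\varphi$ provides an affine \emph{upper} bound $\varphi(x)\leq ax+b$, whence $0\leq\varphi(X)\leq aX+b$ is integrable and $\EE[\varphi(X)\mid\mathcal{G}]$ is finite almost surely.

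I expect the main obstacle to be not a single deep step but the bookkeeping forced by two points that a naive argument would paper over: a convex function on $[0,\infty)$ need not equal the supremum of its interior supporting lines at the endpoint $0$, which is exactly why the event $\{\EE[X\mid\mathcal{G}]=0\}$ must be handled separately; and one may intersect only countably many almost-sure events, so the supporting points genuinely have to be taken from a countable dense set. As all of this is entirely routine, one could alternatively simply cite it, as is done here via \cite{Kle2020}.
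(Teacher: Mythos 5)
Your argument is correct: the supporting-line proof over a countable dense set of interior base points, with the event $\{\EE[X\mid\mathcal{G}]=0\}$ treated separately and the concave case reduced to the convex one via $-\varphi$ (plus the affine upper bound giving integrability of $\varphi(X)$), is exactly the standard textbook proof of conditional Jensen. The paper does not prove this lemma at all — it simply cites \cite{Kle2020} — so there is nothing further to compare; your write-up is a sound expansion of that citation, and the two boundary issues you flag (the endpoint $0$ and countability of the intersected null sets) are handled correctly.
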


Our convergence result and the construction of associated rates of convergence rests on a key descent condition of $\liminf$-type for the associated process, forcing it to have expectation below $\varepsilon$ for each $\varepsilon>0$ infinitely often. As already discussed in the introduction, this descent condition can be ``slowed down'' in the sense that only the expectation of the process under a suitable given function $f$ is expected to decrease asymptotically in this sense. This generality will be necessary for some of our applications that follow. Definition \ref{def:sicc} below specifies the relevant class of admissible functions $f$. 

\begin{definition}
A function $f:[0,\infty)\to [0,\infty)$ is called $\psi$-supermultiplicative for a function $\psi:[0,1]\to [0,1]$ if $f(xa)\geq f(x)\psi(a)$ for all $x\in [0,\infty)$ and $a\in [0,1]$ and $\psi$ satisfies $\psi(x)>0$ for $x>0$.
\end{definition}

\begin{definition}
A function $f:[0,\infty)\to [0,\infty)$ is called continuous at $0=f(0)$ with a modulus $\kappa:(0,\infty)\to (0,\infty)$ if $x<\kappa(\varepsilon)$ implies $f(x)<\varepsilon$ for all $x\in [0,\infty)$ and $\varepsilon>0$.
\end{definition}

\begin{definition}\label{def:sicc}
A function $f:[0,\infty)\to [0,\infty)$ is called s.i.c.c.\ (with moduli $\psi$ and $\kappa$) if it is
\begin{enumerate}
\item $\psi$-supermultiplicative for a function $\psi:[0,1]\to [0,1]$,
\item strictly increasing,
\item concave,
\item continuous, and $\kappa:(0,\infty)\to (0,\infty)$ is a modulus of continuity at $0=f(0)$.
\end{enumerate}
\end{definition}

Before we move on to our main theorem, we give examples of typical s.i.c.c.\ functions and discuss some closure properties enjoyed by that class.

\begin{example}\label{ex:cisc}
\begin{enumerate}
\item The function $x^q$ for $q\in (0,1]$ is s.i.c.c.\ with moduli $a^q$ and $\sqrt[q]{\varepsilon}$. In particular, $\sqrt{x}$ and $x$ are s.i.c.c. To see this, note that $x^q$ is clearly increasing and concave. Further, since $(xa)^q=x^qa^q$ for any $x,a\geq 0$, we get that $x^q$ is $a^q$-supermultiplicative. Lastly, $x^q$ is clearly continuous and since $x< \sqrt[q]{\varepsilon}$ implies $x^q<\varepsilon$, it immediately follows that $\sqrt[q]{\varepsilon}$ is a modulus of continuity for $x^q$ at $0=0^q$.
\item The function $\log_c(1+x)$ for $c>1$ is s.i.c.c.\ with moduli $a$ and $c^\varepsilon-1$. To see this, note that $\log_c(1+x)$ is increasing and concave. Further, $\log_c(1+x)$ is clearly continuous and since $x< c^\varepsilon-1$ implies $\log_c(1+x)<\varepsilon$, the function $c^\varepsilon-1$ is a modulus of continuity for $\log_c(1+x)$ at $0=\log_c(1)$. Lastly, $\log_c(1+x)$ is also $a$-supermultiplicative since we have $(1+x)^a\leq 1+xa$ for all $x\geq 0$ and all $a\in [0,1]$, so that $a\log_c(1+x)\leq \log_c(1+xa)$ for all such $x,a$.
\end{enumerate}
\end{example}

\begin{proposition}\label{prop:closure}
Let $f$ be s.i.c.c.\ with moduli $\psi$ and $\kappa$ and let $g$ be s.i.c.c.\ with moduli $\psi'$ and $\kappa'$. Then:
\begin{enumerate}
\item $\alpha f+\beta g$, given $\alpha,\beta>0$, is s.i.c.c.\ with moduli $\min\{\psi,\psi'\}$ and $\min\{\kappa(\varepsilon/2\alpha),\kappa'(\varepsilon/2\beta)\}$.
\item $f\circ g$ is s.i.c.c.\ with moduli $\psi\circ\psi'$ and $\kappa'\circ\kappa$.
\item $\min\{f,g\}$ is s.i.c.c.\ with moduli $\min\{\psi,\psi'\}$ and $\max\{\kappa,\kappa'\}$.
\end{enumerate}
\end{proposition}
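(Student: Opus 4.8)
The plan is to verify each of the three closure properties by directly checking the four defining conditions of an s.i.c.c.\ function (supermultiplicativity, strict monotonicity, concavity, and continuity at $0$ with the asserted modulus), using the corresponding properties of $f$ and $g$. Each verification is a short computation, so I will organize the proof property by property.

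\smallskip

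For \emph{(1)}, the function $h:=\alpha f+\beta g$ with $\alpha,\beta>0$: strict monotonicity and concavity are immediate since nonnegative linear combinations preserve both, and $h(0)=\alpha f(0)+\beta g(0)=0$. For supermultiplicativity, given $x\geq 0$ and $a\in[0,1]$, I would write $h(xa)=\alpha f(xa)+\beta g(xa)\geq \alpha f(x)\psi(a)+\beta g(x)\psi'(a)\geq (\alpha f(x)+\beta g(x))\min\{\psi(a),\psi'(a)\}$, and note $\min\{\psi,\psi'\}$ is still $(0,1]$-valued and positive on $(0,1]$. For continuity at $0$: if $x<\min\{\kappa(\varepsilon/2\alpha),\kappa'(\varepsilon/2\beta)\}$, then $f(x)<\varepsilon/2\alpha$ and $g(x)<\varepsilon/2\beta$, hence $h(x)<\varepsilon$.

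\smallskip

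For \emph{(2)}, the composition $f\circ g$: monotonicity and concavity of $f\circ g$ follow because $f$ is increasing and concave and $g$ is increasing and concave (composition of a concave increasing function with a concave function is concave), and $(f\circ g)(0)=f(g(0))=f(0)=0$. For supermultiplicativity, for $x\geq 0$, $a\in[0,1]$ I would use $g(xa)\geq g(x)\psi'(a)$ together with $\psi'(a)\in[0,1]$ and the monotonicity of $f$ to get $f(g(xa))\geq f(g(x)\psi'(a))\geq f(g(x))\psi(\psi'(a))$; note $\psi\circ\psi'$ maps $[0,1]$ to $[0,1]$ and is positive on $(0,1]$ since $\psi'(a)>0$ and then $\psi(\psi'(a))>0$. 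For continuity at $0$: if $x<\kappa'(\kappa(\varepsilon))$ then $g(x)<\kappa(\varepsilon)$, hence $f(g(x))<\varepsilon$. For \emph{(3)}, $\min\{f,g\}$: this is concave as the minimum of two concave functions, strictly increasing as the minimum of two strictly increasing functions, vanishes at $0$, and for supermultiplicativity $\min\{f,g\}(xa)=\min\{f(xa),g(xa)\}\geq\min\{f(x)\psi(a),g(x)\psi'(a)\}\geq\min\{f(x),g(x)\}\min\{\psi(a),\psi'(a)\}$; continuity at $0$ with modulus $\max\{\kappa,\kappa'\}$ follows since $x<\max\{\kappa(\varepsilon),\kappa'(\varepsilon)\}$ forces at least one of $f(x)<\varepsilon$ or $g(x)<\varepsilon$, hence $\min\{f(x),g(x)\}<\varepsilon$.

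\smallskip

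The only step requiring any care is the concavity of $f\circ g$ in part (2), where one must invoke the standard fact that the composition of a concave \emph{nondecreasing} function with a concave function is concave — the monotonicity of $f$ is essential here and is exactly condition (2) of the s.i.c.c.\ definition. Everything else is routine, which is presumably why the authors omit the proof; I would therefore either omit it as well or record just the concavity-of-composition point and the modulus bookkeeping.
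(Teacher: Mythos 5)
Your proposal is correct, and it carries out exactly the routine componentwise verification (of supermultiplicativity, strict monotonicity, concavity, and the modulus of continuity at $0$) that the paper explicitly omits as following "immediately from some simple calculations." The modulus bookkeeping in all three parts, including the care taken with the concavity of $f\circ g$ and the positivity of $\psi\circ\psi'$, checks out.
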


Proposition \ref{prop:closure} follows immediately from some simple calculations, so we omit the proof. We now state and prove the main theorem of this section:

\begin{theorem}\label{thm:RatesGeneral}
Let $(\mathcal{F}_n)$ be a filtration of $\mathcal{F}$ and let $(X_{n})$, $(A_n)$ and $(C_n)$ be sequences of nonnegative, integrable real-valued random variables adapted to $(\mathcal{F}_n)$. Suppose that for all $n\in\NN$:
\[
\EE[X_{n+1}\mid \mathcal{F}_n]\leq (1+A_n)X_{n}+C_n\text{ a.s.}
\]
Also, suppose that there exist $K\geq 1$ and $\chi:(0,\infty)\to \NN$ satisfying $\prod_{i=0}^\infty (1+A_i)<K$ a.s.\ and $\sum_{i=\chi(\varepsilon)}^\infty \EE[C_i]<\varepsilon$ for all $\varepsilon>0$. Further, let $f:[0,\infty)\to [0,\infty)$ be s.i.c.c.\ with moduli $\psi$ and $\kappa$. Finally, suppose that $\varphi$ is a $\liminf$-modulus for $(f(X_{n}))$ in expectation in the sense that
\[
\forall\varepsilon>0\ \forall N\in\NN\ \exists n\in [N;\varphi(\varepsilon,N)]\left(\EE[f(X_{n})]<\varepsilon\right).
\]
Then $\EE[f(X_n)]\to 0$ with rate
\[
\rho(\varepsilon):=\varphi\left(\frac{\varepsilon \psi(K^{-1})}{2},\chi\left(\kappa\left(\frac{\varepsilon \psi(K^{-1})}{2}\right)\right)\right)
\]
and $X_n\to 0$ a.s.\ with rate $\rho'(\lambda,\varepsilon):=\rho(\lambda f(\varepsilon))$.
\end{theorem}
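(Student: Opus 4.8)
The plan is to transform the almost-supermartingale into a genuine nonnegative supermartingale by first rescaling away the random perturbations $A_n$ and then completing the resulting process with a tail of its own errors, to apply the slowdown function $f$ only \emph{after} this completion (so that the subadditivity of $f$ is invoked on an entire error tail at once rather than term by term), and finally to use the $\liminf$-modulus $\varphi$ to anchor the process at a point where $\EE[f(X_m)]$ is already small and propagate forward from there.

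Concretely, set $B_n := \prod_{i=0}^{n-1}(1+A_i)$ (with $B_0 = 1$), so that $1 \le B_n < K$ a.s., $B_n$ is $\mathcal F_{n-1}$-measurable and $B_{n+1}$ is $\mathcal F_n$-measurable; put $\tilde X_n := X_n / B_n$ and $c_n := C_n/B_{n+1}$. Dividing the hypothesis by $B_{n+1}$ yields $\EE[\tilde X_{n+1}\mid\mathcal F_n] \le \tilde X_n + c_n$ with $0 \le c_n \le C_n$ and $c_n$ being $\mathcal F_n$-measurable. As $\sum_i \EE[C_i] < \infty$, the variables $T_n := \sum_{i=n}^\infty \EE[c_i\mid\mathcal F_n]$ are a.s.\ finite, and an elementary computation---interchanging sums and conditional expectations by monotone convergence and using the tower property---shows that $U_n := \tilde X_n + T_n$ is a nonnegative supermartingale; applying Lemma~\ref{jensen} (conditional form, $f$ concave) together with the monotonicity of $f$ then shows that $(f(U_n))$ is again a nonnegative supermartingale.

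Everything now rests on the single estimate that for every $m$ and every $n \ge m$,
\[
\psi(K^{-1})\,\EE[f(X_n)] \le \EE[f(\tilde X_n)] \le \EE[f(U_n)] \le \EE[f(U_m)] \le \EE[f(X_m)] + f\!\Big(\textstyle\sum_{i=m}^\infty \EE[C_i]\Big),
\]
where the first inequality uses $\tilde X_n \ge K^{-1} X_n$ together with $f$ being increasing and $\psi$-supermultiplicative, the second and third are immediate from $U_n \ge \tilde X_n$ and the supermartingale property of $(f(U_n))$, and the last uses $U_m = \tilde X_m + T_m$, subadditivity of $f$ (from concavity and $f(0)=0$), $\tilde X_m \le X_m$, and Lemma~\ref{jensen} (unconditional, $f$ concave) to pull the expectation inside $f$, bounding $\EE[T_m] = \sum_{i\ge m}\EE[c_i] \le \sum_{i\ge m}\EE[C_i]$. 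Given $\varepsilon > 0$, put $\delta := \varepsilon\psi(K^{-1})/2$ and $N := \chi(\kappa(\delta))$, so $\sum_{i\ge N}\EE[C_i] < \kappa(\delta)$; the $\liminf$-modulus supplies some $m \in [N,\varphi(\delta,N)] = [N,\rho(\varepsilon)]$ with $\EE[f(X_m)] < \delta$, and since $m \ge N$ the defining property of $\kappa$ gives $f(\sum_{i\ge m}\EE[C_i]) < \delta$; substituting into the display yields $\EE[f(X_n)] < 2\delta/\psi(K^{-1}) = \varepsilon$ for all $n \ge m$, hence for all $n \ge \rho(\varepsilon)$.

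For the almost sure rate, fix $\lambda,\varepsilon > 0$ and rerun the anchoring with $\varepsilon$ replaced by $\lambda f(\varepsilon)$, obtaining $m' \le \rho(\lambda f(\varepsilon)) = \rho'(\lambda,\varepsilon)$ with $\EE[f(U_{m'})] < 2\delta'$ for $\delta' := \lambda f(\varepsilon)\psi(K^{-1})/2$. Since $f$ is strictly increasing and $f(\varepsilon) > f(0) = 0$, we have $\{X_n \ge \varepsilon\} \subseteq \{f(U_n) \ge \psi(K^{-1})f(\varepsilon)\}$ for each $n$ (via $f(U_n) \ge f(\tilde X_n) \ge \psi(K^{-1})f(X_n)$), so Ville's inequality (Lemma~\ref{ville}) applied to the nonnegative supermartingale $(f(U_{m'+k}))_{k\ge 0}$ gives $\PP(\exists n \ge m': X_n \ge \varepsilon) \le \EE[f(U_{m'})]/(\psi(K^{-1})f(\varepsilon)) < 2\delta'/(\psi(K^{-1})f(\varepsilon)) = \lambda$, and since $\rho'(\lambda,\varepsilon) \ge m'$ the conclusion follows. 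The main obstacle is precisely the order of operations around $f$: applying $f$ directly to the errors $C_n$ would introduce the possibly divergent quantity $\sum_n \EE[f(C_n)]$, so one must first complete $\tilde X_n$ to the supermartingale $U_n$ (whose error increments $c_n$ are summable in expectation) and only then compose with $f$, using that concavity survives composition with a supermartingale; relatedly, since $\EE[f(X_m)]$ being small gives no control on $\EE[X_m]$, Ville's inequality must be applied to $f(U_n)$ rather than to $U_n$ or $\tilde X_n$, and the factor $\psi(K^{-1})$ together with strict monotonicity of $f$ is exactly what makes the threshold $\psi(K^{-1})f(\varepsilon)$ usable there.
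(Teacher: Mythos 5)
Your proposal is correct and follows essentially the same route as the paper's own proof: the same completed supermartingale $U_n = X_n/B_n + \EE[\sum_{i\ge n} C_i/B_{i+1}\mid\mathcal F_n]$ (up to an index shift in $B$), the same use of Jensen's inequality to make $f(U_n)$ a supermartingale, the same subadditivity/supermultiplicativity estimates sandwiching $\psi(K^{-1})\EE[f(X_n)]$ between $\EE[f(U_m)]$ and $\EE[f(X_m)] + f(\sum_{i\ge m}\EE[C_i])$, and the same application of Ville's inequality to $f(U_n)$ for the almost-sure rate. The only differences are presentational (you assemble the argument into a single chain of inequalities and make the role of $f$'s placement explicit), so no further comparison is needed.
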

\begin{proof}
For any $n\in\mathbb{N}$, define
\[
U_{n}:=\frac{X_{n}}{B_{n-1}}+\EE\left[\sum_{i=n}^\infty \frac{C_i}{B_i}\mid \mathcal{F}_n\right]\text{ where }B_j:=\prod_{i=0}^j (1+A_i),
\]
with the convention that $B_{-1}:=1$. The stochastic process $(U_{n})$ is a nonnegative supermartingale, since for any $n\in\mathbb{N}$ we have
\begin{align*}
\EE[U_{n+1}\mid\mathcal{F}_n]&=\EE\left[\frac{X_{n+1}}{B_{n}}\mid\mathcal{F}_n\right]+\EE\left[\EE\left[\sum_{i={n+1}}^\infty \frac{C_i}{B_i}\mid \mathcal{F}_{n+1}\right]\mid\mathcal{F}_n\right]\\
&=\frac{\EE[X_{n+1}\mid\mathcal{F}_n]}{B_n}+\EE\left[\sum_{i={n+1}}^\infty \frac{C_i}{B_i}\mid \mathcal{F}_{n}\right]\\
&\leq \frac{X_{n}}{B_{n-1}}+\frac{C_n}{B_n}+\EE\left[\sum_{i={n+1}}^\infty \frac{C_i}{B_i}\mid \mathcal{F}_{n}\right]\\
&=U_{n},
\end{align*}
using that $B_j\geq 1$ for every $j$. Thereby $(f(U_{n}))$ is also a nonnegative supermartingale since for an arbitrary $n\in\mathbb{N}$, we have $\EE[f(U_{n+1})\mid\mathcal{F}_n]\leq f(\EE[U_{n+1}\mid\mathcal{F}_n])\leq f(U_{n})$ using Jensen's inequality (and that $f$ is continuous as well as concave) as well as that $f$ is monotone. Further, for any $n\in\mathbb{N}$, we get
\[
f(U_{n})\leq f\left(\frac{X_{n}}{B_{n-1}}\right)+f\left(\EE\left[\sum_{i=n}^\infty \frac{C_i}{B_i}\mid \mathcal{F}_n\right]\right)\leq f(X_{n})+f\left(\EE\left[\sum_{i=n}^\infty C_i\mid \mathcal{F}_n\right]\right)
\]
using that $f$ is subadditive (since $f$ is concave) and that $f$ is monotone together with $B_{j}\geq 1$ for every $j$. In particular, using Jensen's inequality (and so the concavity and continuity of $f$) again, we have
\[
\EE[f(U_{n})]\leq \EE[f(X_{n})]+f\left(\EE\left[\sum_{i=n}^\infty C_i\right]\right).\tag{$+$}\label{plus}
\]
Now, let $\varepsilon>0$ and choose
\[
n(\varepsilon)\in \left[\chi\left(\kappa\left(\frac{\varepsilon \psi(K^{-1})}{2}\right)\right);\varphi\left(\frac{\varepsilon \psi(K^{-1})}{2},\chi\left(\kappa\left(\frac{\varepsilon \psi(K^{-1})}{2}\right)\right)\right)\right]
\]
such that $\EE[f(X_{n(\varepsilon)})]<\varepsilon\psi(K^{-1})/2$. Let $m\geq n(\varepsilon)$ be arbitrary. Then
\[
\EE[f(U_{m})]\leq \EE[f(U_{n(\varepsilon)})]\leq \EE[f(X_{n(\varepsilon)})]+f\left(\EE\left[\sum_{i=n(\varepsilon)}^\infty C_i\right]\right)\leq \varepsilon \psi(K^{-1})
\]
using that $(f(U_{n}))$ is a supermartingale as well as \eqref{plus} and that $f$ is monotone and continuous at $0$  together with the defining property of $\chi$, which gives $\EE\left[\sum_{i=n(\varepsilon)}^\infty C_i\right]<\kappa\left(\varepsilon \psi(K^{-1})/2\right)$. Finally, we get that
\[
\psi(K^{-1})f(X_m)\leq  f\left(\frac{X_{m}}{K}\right)<f\left(\frac{X_{m}}{B_{m-1}}\right)\leq f(U_{m})
\]
as $B_{m-1}<K$, using also the monotonicity and $\psi$-supermultiplicativity of $f$, and after taking expectations we get
\[
\psi(K^{-1})\EE[f(X_m)]\leq \EE[ f(U_{m})]<\varepsilon \psi(K^{-1})
\]
and so $\EE[f(X_m)]<\varepsilon$. As $m$ was arbitrary, this yields that $\rho$ is a rate of convergence for $\EE[f(X_n)]\to 0$. For the rate that $X_n\to 0$ a.s., note that
\begin{align*}
\PP(\exists m\geq n(\varepsilon)(X_m\geq a))&\leq \PP(\exists m\geq n(\varepsilon)(U_{m}\geq a/K))\\
&\leq \PP(\exists m\geq n(\varepsilon)(f(U_{m})\geq f(a/K)))\\
&\leq \PP(\exists m\geq n(\varepsilon)(f(U_{m})\geq f(a)\psi(K^{-1})))\\
&\leq \frac{\EE[f(U_{n(\varepsilon)})]}{f(a)\psi(K^{-1})}
\end{align*}
where the first inequality follows from the fact that $X_{m}/K<X_{m}/B_{m-1}\leq U_{m}$ for all $m\in\mathbb{N}$ and the last inequality follows from Ville's inequality. This immediately implies that $X_n\to 0$ a.s.\ with rate $\rho'$.
\end{proof}

Disregarding the quantitative information in the above result, we in particular get the following qualitative theorem, applicable to a broad class of stochastic processes:

\begin{corollary}\label{cor:qualCorMain}
Suppose that $(X_n)$, $(A_n)$ and $(C_n)$ satisfy the supermartingale-type property of Theorem \ref{thm:RatesGeneral} and assume that $\prod_{i=0}^\infty (1+A_i)<\infty$ a.s., that $\sum_{i=0}^\infty \EE[C_i]<\infty$, and that $\liminf_{n\to\infty}\EE[f(X_n)]=0$ for some s.i.c.c.\ function $f$. Then $\EE[f(X_n)]\to 0$ and $X_n\to 0$ a.s.
\end{corollary}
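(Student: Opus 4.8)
The plan is to reduce the statement to Theorem~\ref{thm:RatesGeneral}. From $\sum_i\EE[C_i]<\infty$ one immediately obtains a modulus $\chi$ by taking $\chi(\varepsilon)$ to be the least $N$ with $\sum_{i\geq N}\EE[C_i]<\varepsilon$, and from $\liminf_n\EE[f(X_n)]=0$ one obtains a $\liminf$-modulus $\varphi$ for $(f(X_n))$ in expectation by taking $\varphi(\varepsilon,N)$ to be the least $n\geq N$ with $\EE[f(X_n)]<\varepsilon$. So the only hypothesis of Theorem~\ref{thm:RatesGeneral} not literally available is a deterministic constant $K\geq 1$ with $\prod_i(1+A_i)<K$ almost surely, where we only have $\prod_i(1+A_i)<\infty$ almost surely; everything then comes down to dispensing with the uniformity of this bound.

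For the almost sure conclusion $X_n\to 0$, the uniformity of the bound on the perturbations is not needed, and I would simply re-run the relevant part of the proof of Theorem~\ref{thm:RatesGeneral}: the auxiliary processes $(U_n)$ and $(f(U_n))$ are nonnegative supermartingales regardless of any uniform bound on $\prod_i(1+A_i)$, hence converge almost surely; inequality~\eqref{plus}, combined with $\liminf_n\EE[f(X_n)]=0$ and $\sum_{i\geq n}\EE[C_i]\to 0$, forces $\liminf_n\EE[f(U_n)]=0$, which upgrades to $\EE[f(U_n)]\to 0$ since $(\EE[f(U_n)])$ is non-increasing; Fatou's lemma then shows the almost sure limit of $f(U_n)$ is $0$, and as $f$ is strictly increasing and continuous at $0=f(0)$ this gives $U_n\to 0$ a.s.; finally $X_n\leq B_{n-1}U_n$ together with $B_{n-1}\uparrow\prod_i(1+A_i)<\infty$ a.s.\ yields $X_n\to 0$ a.s.

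For the mean convergence $\EE[f(X_n)]\to 0$, I would split, for a parameter $K\geq 1$,
\[
\EE[f(X_n)]=\EE\!\left[f(X_n)\mathbf{1}_{B_{n-1}\leq K}\right]+\EE\!\left[f(X_n)\mathbf{1}_{B_{n-1}> K}\right].
\]
On $\{B_{n-1}\leq K\}$ the concavity of $f$ with $f(0)=0$ gives $f(X_n)\leq B_{n-1}f(X_n/B_{n-1})\leq K\,f(X_n/B_{n-1})\leq K\,f(U_n)$, so the first summand is at most $K\,\EE[f(U_n)]\to 0$. Controlling the second summand — that is, showing it is small uniformly in $n$ once $K$ is taken large, equivalently that $(f(X_n))$ is uniformly integrable — is the heart of the matter. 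When $\prod_i(1+A_i)$ is bounded by a deterministic constant, which is precisely the hypothesis of Theorem~\ref{thm:RatesGeneral}, one takes that constant for $K$ and the tail term is simply $0$; under the weaker assumption $\prod_i(1+A_i)<\infty$ a.s.\ one has to exploit the integrability of the individual $X_n$ against the supermartingale structure (or localise at the level sets $\{\prod_i(1+A_i)\leq K\}$). I expect this transfer of the mean convergence from $f(U_n)$ back to $f(X_n)$, in the absence of a uniform bound on the perturbations, to be the main obstacle of the proof.
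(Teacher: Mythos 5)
You correctly identify that the only hypothesis of Theorem \ref{thm:RatesGeneral} not literally supplied by the corollary is the deterministic constant $K$ with $\prod_i(1+A_i)<K$ a.s., and your first paragraph is exactly the paper's (implicit) argument: the corollary is meant as a direct instantiation of the theorem with the quantitative data forgotten, so the product hypothesis should be read as it appears in the theorem, i.e.\ essentially bounded. Under that reading your proof is already complete after the first paragraph. Your argument for the almost sure conclusion under the genuinely weaker hypothesis $\prod_i(1+A_i)<\infty$ a.s.\ is also correct and is a real strengthening: $(f(U_n))$ is a nonnegative supermartingale with $\EE[f(U_n)]$ non-increasing and of liminf $0$ by \eqref{plus}, hence $f(U_n)\to 0$ a.s.\ by supermartingale convergence plus Fatou, whence $U_n\to 0$ a.s.\ and $X_n\leq B_{n-1}U_n\to 0$ a.s.

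However, the step you flag as ``the heart of the matter'' for the mean convergence is not merely an obstacle: it cannot be closed. Under the literal hypothesis $\prod_i(1+A_i)<\infty$ a.s.\ (without essential boundedness), the conclusion $\EE[f(X_n)]\to 0$ is false. Take $\Omega=\{1,2,3,\dots\}$ with $\PP(\{j\})=2^{-j}$, the constant filtration $\mathcal{F}_n:=2^{\Omega}$, $C_n:=0$ and $f:=\mathrm{id}$, and define $X_n(j):=1$ for $n<2j-1$, $X_{2j-1}(j):=2^{j}$, $X_n(j):=0$ for $n>2j-1$, together with $A_{2j-2}(j):=2^{j}-1$ and $A_n(j):=0$ otherwise. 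All random variables are nonnegative, integrable and adapted, the inequality $X_{n+1}\leq(1+A_n)X_n$ holds pointwise, and $\prod_i(1+A_i)=2^{j}<\infty$ on $\{j\}$; moreover $\EE[X_{2k}]=2^{-k}\to 0$, so $\liminf_n\EE[X_n]=0$, yet $\EE[X_{2k-1}]=1+2^{-k}\to 1$ (while, consistently with your argument, $X_n\to 0$ a.s.). So the tail term $\EE\bigl[f(X_n)\mathbf{1}_{B_{n-1}>K}\bigr]$ genuinely cannot be made small uniformly in $n$, neither by uniform integrability considerations nor by localisation. The correct resolution is to retain the uniform bound $K$ from the theorem as a hypothesis, at which point the corollary is the one-line specialization you give at the start.
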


\begin{remark}
The conclusion of $\EE[f(X_n)]\to 0$ in the above Theorem \ref{thm:RatesGeneral} is in general the best one can hope for when allowing for general s.i.c.c.\ functions $f$ other than the identity. Concretely, already in the case of $f=\sqrt{\cdot}$, it is not necessarily the case that we also have $\EE[X_n]\to 0$. To see this, consider the following example: Let $(Y_n)$ be a nonnegative i.i.d.\ stochastic process with $\EE[Y_n]=1$ but $\EE[\sqrt{Y_n}]=\eta\in (0,1)$ for all $n\in\mathbb{N}$ (arising e.g.\ naturally through a Bernoulli process with values $0$ and $2$, each with probability $\frac{1}{2}$). Defining $X_n:=\prod_{k=0}^nY_k$ as well as choosing $\mathcal{F}_n:=\sigma(Y_0,\dots,Y_n)$, i.e.\ $\mathcal{F}_n$ is the $\sigma$-algebra generated by $Y_0,\dots,Y_n$, we immediately obtain that $(X_n)$ is a martingale w.r.t.\ the filtration $(\mathcal{F}_n)$. As $(Y_n)$ is independent, we have
\[
\EE[\sqrt{X_n}]=\prod_{k=0}^n\EE[\sqrt{Y_k}]=\prod_{k=0}^n\eta\to 0.
\]
In particular, the conditions of Theorem \ref{thm:RatesGeneral} are trivially satisfied for $A_n:=C_n:=0$. However, for similar reasons we have $\EE[X_n]=\prod_{k=0}^n\EE[Y_k]=1$ for any $n\in\mathbb{N}$, i.e.\ $(\EE[X_n])$ does not converge to $0$.
\end{remark}

In many situations where Theorem \ref{thm:RatesGeneral} applies, the crucial $\liminf$-property for $(f(X_n))$ in expectation and its corresponding modulus arise through a composition of two other properties, quantitatively witnessed by corresponding moduli in an analogous way. Concretely, a $\liminf$-modulus $\varphi$ for $(f(X_n))$ in expectation comes into existence through a $\liminf$-modulus $\varphi'$ for an auxiliary sequence $(V_n)$ in expectation, i.e.
\[
\forall\varepsilon>0\ \forall N\in\NN\ \exists n\in [N;\varphi'(\varepsilon,N)]\left(\EE[V_n]<\varepsilon\right),
\]
together with a type of regularity modulus $\tau:(0,\infty)\to (0,\infty)$ connecting $(V_n)$ and $f(X_n)$ in expectation, by which we here concretely mean that 
\[
\forall n\in\mathbb{N}\ \forall\varepsilon>0\left( \EE[V_n]<\tau(\varepsilon)\to \EE[f(X_n)]<\varepsilon\right).
\]
In the presence of these two moduli, we immediately get that
\[
\varphi(\varepsilon):=\varphi'(\tau(\varepsilon),N)
\]
is a $\liminf$-modulus for $(f(X_n))$ in expectation. This rather abstract conception of stochastic regularity in particular underlies our notion of stochastic uniqueness introduced in Section \ref{sec:fejer} later on, and beyond that serves as the model for the related stochastic regularity moduli considered in \cite{PischkePowell2026}, as already discussed in the introduction (as well as later in Section \ref{sec:fejer}). 

In that context, a particularly useful result is the following, which (formulated in an abstract way) guarantees that whenever such a modulus is convex and increasing, a regularity property in mean as given above can be derived from an associated (and generally easier to establish) \emph{pointwise} regularity property. This result is also important in \cite{PischkePowell2026}, and we refer to Remark 3.9 therein for a discussion of the fact that the convexity of such a modulus can be largely guaranteed, by taking convex envelopes.

\begin{proposition}\label{pro:AStoEconvex}
Let $I$ be a non-empty index set and let $(X_i)_{i\in I}$, $(V_i)_{i\in I}$ be two families of nonnegative real-valued random variables with
\[
V_i\geq\tau(X_i)\text{ a.s.}
\]
for all $i\in I$, where $\tau:[0,\infty)\to [0,\infty)$ is a convex and strictly increasing function with $\tau(0)=0$. Assume that each $X_i$ is integrable. Then $\tau$ satisfies
\[
\forall i\in I\ \forall\varepsilon>0\left( \EE[V_i]<\tau(\varepsilon)\to \EE[X_i]<\varepsilon\right).
\]
\end{proposition}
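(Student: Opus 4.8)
The plan is to argue contrapositively, or rather to chain two inequalities together. Fix $i \in I$ and $\varepsilon > 0$, and suppose $\EE[V_i] < \tau(\varepsilon)$. The goal is to conclude $\EE[X_i] < \varepsilon$. The only two facts available about $\tau$ are that it is convex and strictly increasing, and the only structural hypothesis is $V_i \geq \tau(X_i)$ almost surely. The natural route is therefore: first push the pointwise inequality $V_i \geq \tau(X_i)$ through expectation to get $\EE[V_i] \geq \EE[\tau(X_i)]$, then apply Jensen's inequality (Lemma \ref{jensen}, the convex case, with $\varphi := \tau$, $\mathcal{G}$ the trivial $\sigma$-algebra, $X := X_i$, which is integrable and nonnegative by hypothesis) to get $\EE[\tau(X_i)] \geq \tau(\EE[X_i])$. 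Combining, $\tau(\EE[X_i]) \leq \EE[V_i] < \tau(\varepsilon)$, and since $\tau$ is strictly increasing this yields $\EE[X_i] < \varepsilon$, as desired.

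A small technical wrinkle: to invoke Lemma \ref{jensen} I need $\tau$ to be a measurable function $[0,\infty) \to [0,\infty)$; but a convex function on $[0,\infty)$ is automatically continuous on $(0,\infty)$, hence measurable, and strict monotonicity with $\tau \geq 0$ implied (or rather, no sign hypothesis is literally stated, so I should remark that one may assume $\tau \geq 0$ without loss, or note that Jensen's inequality as stated in Lemma \ref{jensen} requires $\varphi : [0,\infty) \to [0,\infty)$ — this is consistent with the statement of the proposition where $\tau : [0,\infty) \to [0,\infty)$). The monotonicity step $\EE[V_i] \geq \EE[\tau(X_i)]$ also uses that $\tau(X_i)$ is well-defined and has an expectation in $[0,\infty]$; since $\tau$ is nonnegative this is fine, and the bound $\EE[\tau(X_i)] \leq \EE[V_i] < \infty$ retroactively shows $\tau(X_i)$ is in fact integrable, so no issues with $\infty - \infty$ arise.

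I do not anticipate any genuine obstacle here — the statement is essentially a one-line consequence of Jensen plus monotonicity of expectation plus strict monotonicity of $\tau$. The only point requiring a modicum of care is making sure the integrability and measurability hypotheses of Lemma \ref{jensen} are met, which they are: $X_i$ is integrable and nonnegative by assumption, and $\tau$ is measurable as a convex function. So the proof is short, and the "hard part," such as it is, is merely stating it cleanly.

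\begin{proof}
Fix $i\in I$ and $\varepsilon>0$ and assume $\EE[V_i]<\tau(\varepsilon)$. Since $\tau$ is convex on $[0,\infty)$, it is in particular continuous on $(0,\infty)$ and hence (Borel-)measurable; as $X_i$ is nonnegative and integrable, Jensen's inequality (Lemma \ref{jensen}, applied with $\varphi:=\tau$ and the trivial sub-$\sigma$-algebra) gives
\[
\tau(\EE[X_i])\leq \EE[\tau(X_i)].
\]
On the other hand, $V_i\geq \tau(X_i)$ a.s.\ together with monotonicity of the expectation yields $\EE[\tau(X_i)]\leq \EE[V_i]$. Combining these two inequalities with the assumption, we obtain
\[
\tau(\EE[X_i])\leq \EE[V_i]<\tau(\varepsilon).
\]
Since $\tau$ is strictly increasing, this implies $\EE[X_i]<\varepsilon$. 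As $i\in I$ and $\varepsilon>0$ were arbitrary, the claim follows.
\end{proof}
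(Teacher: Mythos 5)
Your proof is correct and follows exactly the paper's own argument: pass the pointwise inequality $V_i\geq\tau(X_i)$ through the expectation, apply Jensen's inequality for the convex $\tau$ to get $\tau(\EE[X_i])\leq\EE[\tau(X_i)]\leq\EE[V_i]<\tau(\varepsilon)$, and conclude by strict monotonicity of $\tau$. The extra remarks on measurability and integrability are sound but not needed beyond what the paper already assumes.
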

\begin{proof}
Take $\varepsilon>0$ and $i\in I$ with $\EE[V_i]<\tau(\varepsilon)$. Thus $\EE[\tau(X_i)]<\tau(\varepsilon)$ by the above inequality. Using Jensen's inequality, as $\tau$ is convex, we have $\tau(\EE[X_i])\leq \EE[\tau(X_i)]<\tau(\varepsilon)$. As $\tau$ is increasing, we have $\EE[X_i]<\varepsilon$.
\end{proof}

\begin{remark}\label{rem:regEquiv}
The above condition that $V_i\geq\tau(X_i)$ a.s.\ for all $i\in I$ can, for a strictly increasing $\tau$, be equivalently recognized as requiring that $\tau$ satisfies $V_i<\tau(\varepsilon)\rightarrow X_i<\varepsilon$ a.s.\ for all $i\in I$ and $\varepsilon>0$, and is hence equivalent to a pointwise variant of the regularity property in expectation. Similarly, one can show that such a $\tau$ satisfies a regularity property in mean in the form of $\EE[V_i]<\tau(\varepsilon)\to \EE[X_i]<\varepsilon$ for all $i\in I$ and $\varepsilon>0$, if, and only if, it induces a growth condition between $X_i$ and $V_i$ in mean, in the form of $\EE[V_i]\geq \tau(\EE[X_i])$ for all $i\in I$.
\end{remark}

\section{Applications to the Robbins-Siegmund theorem}\label{sec:RS}

The first application of our general result the seminal and widely used theorem of Robbins-Siegmund on supermartingale convergence:

\begin{theorem}[\cite{robbins-siegmund:71:lemma}]\label{thm:RS:original}
Let $(X_n)$, $(A_n)$, $(B_n)$ and $(C_n)$ be sequences of nonnegative integrable real-valued random variables adapted to a filtration $(\mathcal F_n)$. Assume that $\sum_{i=0}^\infty A_i<\infty$ and $\sum_{i=0}^\infty C_i<\infty$ a.s.\ and 
\[
\EE[X_{n+1}\mid\mathcal{F}_n]\leq (1+A_n)X_n-B_n+C_n
\]
a.s.\ for all $n\in\NN$. Then $(X_n)$ converges a.s.\ and $\sum_{i=0}^\infty B_i<\infty$ a.s.
\end{theorem}

As anticipated by the range of applications presented in the original work \cite{robbins-siegmund:71:lemma}, this result has become a fundamental tool for establishing the convergence of stochastic approximation algorithms, and we refer to the recent survey paper \cite{franci-grammatico:convergence:survey:22} for a broad overview of some of the many applications up to 2022.

In its full generality as presented above, the Robbins-Siegmund theorem does not admit a direct quantitative counterpart presenting effective ``full'' rates of convergence: In particular, it contains as a special case the standard Doob convergence theorem for $L^1$-supermartingales, where convergence can be arbitrarily slow.\footnote{\label{specker}A more precise way of formulating this point using the language of computability theory is that $L^1$-supermartingales inlcude, as trivial special case, monotone bounded sequences of real numbers, in which context well-known results from computable analysis due to Specker \cite{specker:49:sequence} rule out general computable rates of convergence.} The question of what quantitative information is possible in this \emph{general} case has been recently addressed in \cite{NeriPowell2026}, where a kind of oscillation bound is given. 

However, in many applications where the Robbins-Siegmund theorem is utilized to prove the convergence of some concrete stochastic method, one is actually in the situation that $B_n:= u_nV_n$ for $(u_n)$ a sequence of nonnegative control parameters with $\sum_{i=0}^\infty u_i=\infty$, from which we can additionally infer that $\liminf_{n\to\infty} \EE[V_n] = 0$. Moreover, the circumstances of the problem at hand allow often allow us to derive a modulus of regularity connecting the process $V_n$ with the process $X_n$ in the style of the preceding Section \ref{sec:genRates}. In these situations, we can apply Theorem \ref{thm:RatesGeneral} instead of the Doob martingale convergence theorem to establish convergence of $X_n$ towards zero, in mean and almost surely, together with explicit rates of convergence in both cases. This holds for several of the applications discussed in the original paper of Robbins and Siegmund \cite{robbins-siegmund:71:lemma} (including stochastic approximation and Dvoretzky's theorem, where the latter two will be discussed later), and is evidently the case in many of the hundreds of subsequent papers where supermartingale convergence has been applied.

To obtain our quantitative version of the Robbins-Siegmund theorem in the above circumstances, we require two preliminary quantitative results. The first is a quantitative version of slight strengthening of a lemma of Qihou \cite{Qih2001}, which itself is a non-stochastic analogue of the Robbins-Siegmund theorem. The first part of the following result appears e.g.\ as Lemma 5.31 in \cite{BC2017} and the bound on the series $\sum_{i=0}^\infty\beta_i$ appears as Theorem 3.2 in \cite{NeriPowell2026} (together with a quantitative result on the convergence of the sequence $(x_n)$ which, in the absence of the terms $(\beta_n)$, was already discussed in \cite{KL2004}).

\begin{lemma}\label{lem:qihou}
Let $(x_n)$, $(\alpha_n)$, $(\beta_n)$ and $(\gamma_n)$ be sequences of nonnegative reals with
\[
x_{n+1}\leq (1+\alpha_n)x_n-\beta_n+\gamma_n
\]
for all $n\in\mathbb{N}$. If $\prod_{i=0}^\infty(1+\alpha_i)<\infty$ and $\sum_{i=0}^\infty\gamma_i<\infty$, then $(x_n)$ converges and $\sum_{i=0}^\infty\beta_i<\infty$. Further, if $K,L,M>0$ satisfy $x_0<K$, $\prod_{i=0}^\infty(1+\alpha_i)<L$ and $\sum_{i=0}^\infty\gamma_i<M$, then $\sum_{i=0}^\infty\beta_i<L(K+M)$.
\end{lemma}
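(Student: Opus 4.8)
The approach is to run the same ``divide by the product'' transformation used in the proof of Theorem \ref{thm:RatesGeneral}, which in this deterministic setting becomes completely elementary. First I would set $B_j := \prod_{i=0}^{j}(1+\alpha_i)$ with $B_{-1}:=1$; since the $\alpha_i$ are nonnegative, $(B_j)$ is nondecreasing with $B_j \ge 1$, and the hypothesis $\prod_{i=0}^\infty(1+\alpha_i)<\infty$ says precisely that $(B_j)$ converges to a finite limit $B_\infty \ge 1$ with $B_\infty = \sup_j B_j$. Dividing the recursion $x_{n+1}\le(1+\alpha_n)x_n-\beta_n+\gamma_n$ by $B_n$ and putting $y_n := x_n/B_{n-1}$, $b_n := \beta_n/B_n$, $c_n := \gamma_n/B_n$ yields the clean inequality $y_{n+1}\le y_n - b_n + c_n$ with all three sequences nonnegative, where moreover $c_n \le \gamma_n$ (as $B_n\ge 1$) and hence $\sum_n c_n \le \sum_n \gamma_n < \infty$.

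Next I would introduce $z_n := y_n + \sum_{i=n}^\infty c_i$, which is well-defined by the previous line. A one-line computation from $y_{n+1}\le y_n-b_n+c_n$ gives $z_{n+1}\le z_n - b_n \le z_n$, so $(z_n)$ is nonincreasing and bounded below by $0$, hence convergent; telescoping $b_n \le z_n - z_{n+1}$ gives $\sum_{n=0}^\infty b_n \le z_0 = y_0 + \sum_{i=0}^\infty c_i = x_0 + \sum_{i=0}^\infty c_i < \infty$. Multiplying back by $B_n \le B_\infty$ then gives $\sum_{n=0}^\infty \beta_n = \sum_n B_n b_n \le B_\infty \sum_n b_n < \infty$, the second qualitative conclusion. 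For convergence of $(x_n)$, note that the tail $\sum_{i=n}^\infty c_i \to 0$, so $y_n = z_n - \sum_{i=n}^\infty c_i$ converges, and since $B_{n-1}\to B_\infty$ we conclude that $x_n = B_{n-1}y_n$ converges.

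For the quantitative clause I would simply carry the constants through the same estimate. From $y_0 = x_0$ and $\sum_i c_i \le \sum_i \gamma_i$ we get $\sum_{n<N} b_n \le x_0 + \sum_i\gamma_i$ for every $N$, a \emph{fixed} quantity which is strictly below $K+M$ by the hypotheses $x_0<K$ and $\sum_i\gamma_i<M$; hence $\sum_{n=0}^\infty b_n \le x_0+\sum_i\gamma_i$ and therefore $\sum_{n=0}^\infty\beta_n \le B_\infty\sum_{n=0}^\infty b_n \le B_\infty\bigl(x_0+\sum_i\gamma_i\bigr) < L(K+M)$, using $B_\infty < L$ together with $x_0+\sum_i\gamma_i<K+M$.

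There is no serious obstacle here; the only point deserving a moment's care is that $\prod_{i=0}^\infty(1+\alpha_i)<\infty$ really yields a convergent sequence of partial products with positive limit — so that dividing by $B_n$ and passing to the limit in $x_n = B_{n-1}y_n$ are both legitimate — which for nonnegative $\alpha_i$ is immediate from monotone convergence of the partial products. The mild ``strengthening'' over the original lemma of Qihou is just the explicit bound $L(K+M)$, which the above bookkeeping delivers directly.
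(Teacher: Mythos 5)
Your proof is correct in all its steps, including the strict inequality $\sum_{n}\beta_n \le B_\infty\bigl(x_0+\sum_i\gamma_i\bigr) < L(K+M)$ in the quantitative clause. The paper itself does not prove this lemma (it delegates the first part to Lemma 5.31 of \cite{BC2017} and the bound to Theorem 3.2 of \cite{neri-powell:pp:rs}), but your normalization $y_n = x_n/B_{n-1}$ followed by adding the tail $\sum_{i\ge n} c_i$ is exactly the deterministic analogue of the construction of $U_n$ in the proof of Theorem \ref{thm:RatesGeneral}, so it is fully in the spirit of the paper and fills the omitted proof cleanly.
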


Second, we also require the following result on the asymptotic behavior of the summands of certain series.

\begin{lemma}\label{sumconv}
Let $(u_n)$, $(v_n)$ be sequences of nonnegative reals with $\sum_{n=0}^\infty u_nv_n< L$ where $L>0$ and $\sum_{n=k}^{\theta(k,b)} u_n\geq b$ for all $b>0$ and $k\in\NN$ where $\theta:\mathbb{N}\times(0,\infty)\to \NN$. Then $\liminf_{n\to\infty}v_n=0$ with 
\[
\forall\varepsilon>0\ \forall N\in\mathbb{N}\ \exists n\in [N;\theta(N,L/\varepsilon)](v_n<\varepsilon).
\]
\end{lemma}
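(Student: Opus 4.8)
The plan is to argue by contradiction on the $\liminf$-modulus statement. Suppose, for some $\varepsilon > 0$ and some $N \in \NN$, that $v_n \geq \varepsilon$ for every $n$ in the interval $[N; \theta(N, L/\varepsilon)]$. The idea is that on this interval the weighted sum $\sum u_n v_n$ must then be bounded below by $\varepsilon$ times $\sum u_n$, and the hypothesis on $\theta$ forces $\sum_{n=N}^{\theta(N, L/\varepsilon)} u_n$ to be at least $L/\varepsilon$, yielding a contribution of at least $L$ to the full series $\sum_{n=0}^\infty u_n v_n$ — contradicting the strict bound $\sum_{n=0}^\infty u_n v_n < L$ (here using nonnegativity of all terms so that the partial sum over the interval is dominated by the full sum).

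Concretely, I would first instantiate the defining property of $\theta$ with $k := N$ and $b := L/\varepsilon$, giving $\sum_{n=N}^{\theta(N,L/\varepsilon)} u_n \geq L/\varepsilon$. Then, under the contradiction hypothesis that $v_n \geq \varepsilon$ throughout $[N;\theta(N,L/\varepsilon)]$, I would compute
\[
L > \sum_{n=0}^\infty u_n v_n \geq \sum_{n=N}^{\theta(N,L/\varepsilon)} u_n v_n \geq \varepsilon \sum_{n=N}^{\theta(N,L/\varepsilon)} u_n \geq \varepsilon \cdot \frac{L}{\varepsilon} = L,
\]
where the second inequality uses that all summands $u_n v_n$ are nonnegative, and the third uses $v_n \geq \varepsilon$ together with $u_n \geq 0$. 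This is the desired contradiction, so there must exist some $n \in [N;\theta(N,L/\varepsilon)]$ with $v_n < \varepsilon$. Since $\varepsilon > 0$ and $N$ were arbitrary, this establishes the quantitative $\liminf$ statement, and the qualitative conclusion $\liminf_{n\to\infty} v_n = 0$ follows immediately (given any $\varepsilon$, the set $\{n : v_n < \varepsilon\}$ is unbounded).

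This lemma is entirely elementary and I do not anticipate any genuine obstacle; the only mild subtlety is the bookkeeping in passing from the infinite series to the finite sub-sum, which is why the nonnegativity hypotheses on $(u_n)$ and $(v_n)$ are essential and should be invoked explicitly. One should also note that the argument implicitly needs the interval $[N;\theta(N,L/\varepsilon)]$ to be nonempty, i.e.\ $\theta(N, L/\varepsilon) \geq N$, which is part of what it means for $\theta$ to witness $\sum_{n=N}^{\theta(N,L/\varepsilon)} u_n \geq L/\varepsilon > 0$ in a meaningful way; if one wants to be fully careful, one can observe that if $\theta(N,L/\varepsilon) < N$ the displayed sum is empty and hence $0$, contradicting $\geq L/\varepsilon$, so nonemptiness comes for free from the hypothesis.
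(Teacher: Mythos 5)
Your proof is correct and is essentially identical to the paper's: both argue by contradiction, instantiating $\theta$ at $(N, L/\varepsilon)$ and chaining $L \leq \varepsilon\sum_{n=N}^{\theta(N,L/\varepsilon)} u_n \leq \sum_{n=N}^{\theta(N,L/\varepsilon)} u_nv_n \leq \sum_{n=0}^{\infty} u_nv_n < L$. The extra remarks on nonnegativity and nonemptiness of the interval are fine but not needed beyond what the paper implicitly uses.
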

\begin{proof}
Fix $\varepsilon>0$ and $N\in\NN$. If $v_n\geq \varepsilon$ for all $n\in [N;\theta(N,L/\varepsilon)]$, we would have
\[
L\leq \varepsilon\sum_{n=N}^{\theta(N,L/\varepsilon)} u_n\leq \sum_{n=N}^{\theta(N,L/\varepsilon)} u_nv_n\leq \sum_{n=0}^{\infty} u_nv_n<L
\]
which is a contradiction.
\end{proof}

The main result of this section is now a quantitative version of the Robbins-Siegmund theorem \cite{robbins-siegmund:71:lemma} adapted to use-cases in stochastic approximation, which follows as a relatively simple corollary of Theorem \ref{thm:RatesGeneral}.

\begin{theorem}\label{thm:RS}
Let $(X_n)$, $(V_n)$ and $(C_n)$ be sequences of nonnegative integrable real-valued random variables adapted to $(\mathcal{F}_n)$, and $(a_n)$, $(u_n)$ be sequences of nonnegative reals. Suppose that
\[
\EE[X_{n+1}\mid\mathcal{F}_n]\leq (1+a_n)X_n-u_nV_n+C_n\text{ a.s.}
\]
for all $n\in\NN$. Also, suppose that there exist $K,L,M>0$ satisfying $\EE[X_0]<L$, $\prod_{i=0}^\infty(1+a_i)<K$ and $\sum_{i=0}^\infty \EE[C_i]<M$ along with $\chi:(0,\infty)\to \NN$ and $\theta:\NN\times (0,\infty)\to \NN$ such that $\sum_{i=k}^{\theta(k,b)} u_i\geq b$ for all $b>0$ and $k\in\NN$ as well as $\sum_{i=\chi(\varepsilon)}^\infty \EE[C_i]<\varepsilon$ for all $\varepsilon>0$. Finally, suppose that $\tau:(0,\infty)\to (0,\infty)$ satisfies
\[
\forall \varepsilon>0\, \forall n\in\NN\left(\EE[V_n]<\tau(\varepsilon)\to \EE[f(X_n)]<\varepsilon\right)
\]
for some s.i.c.c.\ $f:[0,\infty)\to [0,\infty)$ with moduli $\psi$ and $\kappa$. Then $\EE[f(X_n)]\to 0$ with rate
\[
\rho(\varepsilon):=\theta\left(\chi\left(\kappa\left(\varepsilon'\right)\right),\frac{K(L+M)}{\tau(\varepsilon')}\right) \ \ \text{for} \ \ \varepsilon':=\frac{\varepsilon\psi(K^{-1})}{2}
\]
and $X_n\to 0$ a.s.\ with rate $\rho'(\lambda,\varepsilon):=\rho(\lambda f(\varepsilon))$.
\end{theorem}

\begin{proof}
Integrating both sides of the supermartingale property we obtain
\[
\EE[X_{n+1}]\leq (1+a_n)\EE[X_n]-u_n\EE[V_n]+\EE[C_n].
\]
Lemma \ref{lem:qihou} yields $\sum_{n=0}^\infty u_n\EE[V_n]\leq K(L+M)$. Therefore by Lemma \ref{sumconv} we have $\liminf_{n\to\infty}\EE[V_n]=0$ modulus $\varphi'(\varepsilon,N):=\theta(N,K(L+M)/\varepsilon)$ and thus a $\liminf$-modulus for $(f(X_n))$ is given by $\varphi(\varepsilon,N):=\varphi'(\tau(\varepsilon),N)$. Applying Theorem \ref{thm:RatesGeneral} with $A_n:=a_n$ gives the result.
\end{proof}

Theorem \ref{thm:RS} represents a general quantitative convergence result, applicable in typical situations in which the Robbins-Siegmund theorem appears, and whenever the underlying regularity property can be formulated in a quantitative way. Precisely because of its generality, the resulting convergence rates will not always be optimal under stronger conditions. However, in those cases, it will typically be possible to refine Theorem \ref{thm:RS} so that it gives us faster rates, under these stronger assumptions. We now illustrate this idea with a simple but meaningful instantiation, showing that if the regularity modulus is linear (as is, for example, the case for various examples as discussed in detail in \cite{PischkePowell2026}, as well as \cite{KLAN2019}) then we can also obtain linear rates of convergence under suitable assumptions on the other parameters. We first need a slight adaptation of a standard quantitative result about reals (see e.g.\ \cite{Nemirovski2009}):

\begin{lemma}\label{lem:ssLikeForQM}
Suppose that $(x_n)$ is a sequence of nonnegative reals such that for $c>1$, $d\geq 0$ and $r\in \NN\backslash\{0\}$, we have
\[
x_{n+1}\leq \left(1-\frac{c}{n+r}\right)x_n+\frac{d}{(n+r)^2}
\]
for all $n\in\NN$. Then for all $n\in\NN$:
\[
x_n\leq \frac{u}{n+r} \ \text{ for }\ u\geq \max\left\{\frac{d}{c-1},rx_0\right\}.
\]
\end{lemma}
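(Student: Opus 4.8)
The plan is a straightforward induction on $n$, with the hypotheses on $u$ tailored precisely so that the base case and the inductive step both close. First I would verify the base case $n=0$: the claimed bound is $x_0\le u/r$, which is exactly the assumption $u\ge rx_0$ (recall $r\ge 1$).

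For the inductive step, assume $x_n\le u/(n+r)$ and aim to show $x_{n+1}\le u/(n+1+r)$. Writing $m:=n+r$, I would substitute the inductive hypothesis into the recursion, using that the coefficient $1-c/m$ is nonnegative (which holds once $m\ge c$), to obtain
\[
x_{n+1}\le\left(1-\frac{c}{m}\right)\frac{u}{m}+\frac{d}{m^2}=\frac{u}{m}-\frac{cu-d}{m^2}.
\]
It then remains to verify the elementary inequality $\frac{u}{m}-\frac{cu-d}{m^2}\le\frac{u}{m+1}$. Clearing the (positive) denominators, this rearranges to $\big((c-1)u-d\big)m+(cu-d)\ge 0$, and here is where the hypothesis $u\ge d/(c-1)$ enters: it forces $(c-1)u-d\ge 0$, and hence also $cu-d\ge u\ge 0$, so the inequality holds. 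This closes the induction.

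The step I would be most careful about is this last arithmetic rearrangement, since it is precisely calibrated to the two lower bounds imposed on $u$; everything else is routine. The one genuinely delicate point is the sign of the coefficient $1-c/(n+r)$: the substitution above needs $n+r\ge c$, so in full generality one would treat the initial indices with $n+r<c$ separately (there $(1-c/(n+r))x_n\le 0$ by nonnegativity of $x_n$), though in the applications of interest the offset $r$ is taken large enough that this segment is empty and the main content of the proof is the rearrangement above.
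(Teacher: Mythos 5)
Your induction is correct and is essentially the paper's own proof: the base case is exactly $u\geq rx_0$, and your rearranged inequality $\bigl((c-1)u-d\bigr)m+(cu-d)\geq 0$ is precisely the content of the paper's observation that $d\leq u\bigl(c-\tfrac{n+r}{n+r+1}\bigr)$, so the two arguments coincide.

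One correction to your closing remark, however: the parenthetical suggestion for the indices with $n+r<c$ does not work. Bounding $\bigl(1-\tfrac{c}{n+r}\bigr)x_n\leq 0$ only yields $x_{n+1}\leq d/(n+r)^2$, and this need not be $\leq u/(n+r+1)$; that would require $d(n+r+1)\leq u(n+r)^2$, which the hypotheses $u\geq d/(c-1)$ and $n+r<c$ do not supply. Indeed the lemma as stated can fail when $r<c$: take $c=10$, $d=9$, $r=1$, and the sequence $x_0=1$, $x_1=x_2=x_3=0$, $x_4=9/25$, $x_n=0$ for $n\geq 5$. One checks the recursion at every step (e.g.\ at $n=0$ the right-hand side is $-9+9=0$, at $n=3$ it is $9/16\geq 9/25$, and at $n=4$ it is $-9/25+9/25=0$), yet $x_4=9/25>1/5=u/(4+r)$ for the admissible value $u=1$. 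So the statement must be read with the tacit extra hypothesis that $1-c/(n+r)\geq 0$ for all $n$, i.e.\ $r\geq c$ --- which is exactly what your main computation needs, and what the paper's proof also assumes silently when it multiplies the inductive hypothesis by $1-\tfrac{c}{n+r}$. You were right to single out the sign of that coefficient as the delicate point; just do not claim the small-index segment can be absorbed by the nonnegativity trick.
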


\begin{proof}
We show the claim by induction. The case $n=0$ holds by definition and for the induction step we note that since $d\leq u(c-1)$, we have $d\leq u\left(c-\frac{n+r}{n+r+1}\right)$ for all $n,r$, and therefore
\begin{align*}
x_{n+1}&\leq \left(1-\frac{c}{n+r}\right)\frac{u}{n+r}+\frac{d}{(n+r)^2}\\
&\leq u\left(\left(1-\frac{c}{n+r}\right)\frac{1}{n+r}+\left(c-\frac{n+r}{n+r+1}\right)\frac{1}{(n+r)^2}\right)\\
&=u\left(\frac{1}{n+r}-\frac{1}{(n+r+1)(n+r)}\right)\\
&=\frac{u}{n+r+1}\left(\frac{n+r+1}{n+r}-\frac{1}{n+r}\right)\\
&=\frac{u}{n+r+1}
\end{align*}
which completes the induction.
\end{proof}

This gives the following result on fast rates for the Robbins-Siegmund theorem in the context of fast parameters and a linear regularity modulus $\tau(\varepsilon)=t\varepsilon$ for some $t>0$.

\begin{theorem}\label{thm:RSfast}
Let $(X_n)$, $(V_n)$ and $(C_n)$ be sequences of nonnegative integrable real-valued random variables adapted to $(\mathcal{F}_n)$, and $(a_n)$, $(u_n)$ be sequences of nonnegative reals. Suppose that
\[
\EE[X_{n+1}\mid\mathcal{F}_n]\leq (1+a_n)X_n-u_nV_n+C_n\text{ a.s.}
\]
for all $n\in\NN$. Also, let $K\geq 1$ and $L>0$ be such that $\prod_{i=0}^\infty (1+a_i)<K$ and $L\geq \EE[X_0]$ and assume that
\[
\EE[C_n]\leq d/(n+r)^2\text{ and }a_n+c/(n+r)\leq tu_n
\]
for all $n\in\mathbb{N}$ where $c>1$, $d\geq 0$ and $r\in\mathbb{N}\setminus\{0\}$. Finally, suppose we have $\EE[V_n]\geq t\EE[X_n]$ for all $n\in\mathbb{N}$ with $t>0$. Then
\[
\EE[X_n]\leq\frac{u}{n+r} \text{ and }\PP\left(\exists m\geq n(X_m\geq \varepsilon)\right)\leq \frac{1}{\varepsilon}\cdot\frac{K(u+2d)}{n+r}
\]
for all $n\in\mathbb{N}$ and $\varepsilon>0$, where $u\geq \max\left\{d/(c-1),rL\right\}$.
\end{theorem}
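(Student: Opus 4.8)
The plan is to derive the in-mean bound by reducing, after taking expectations, to the deterministic recursion lemma (Lemma~\ref{lem:ssLikeForQM}), and then to obtain the probabilistic bound by reusing the supermartingale construction from the proof of Theorem~\ref{thm:RatesGeneral} together with Ville's inequality (Lemma~\ref{ville}). Fix throughout a value $u\geq\max\{d/(c-1),rL\}$. First I would integrate the supermartingale hypothesis over $\Omega$ using the tower property, obtaining $\EE[X_{n+1}]\leq(1+a_n)\EE[X_n]-u_n\EE[V_n]+\EE[C_n]$. Since $u_n\geq0$ and $\EE[V_n]\geq t\EE[X_n]$, the middle term is at most $-tu_n\EE[X_n]$; combining this with $a_n+c/(n+r)\leq tu_n$, with $\EE[X_n]\geq0$, and with $\EE[C_n]\leq d/(n+r)^2$ yields
\[
\EE[X_{n+1}]\leq\left(1-\frac{c}{n+r}\right)\EE[X_n]+\frac{d}{(n+r)^2}.
\]
Writing $x_n:=\EE[X_n]$ and noting that $rx_0=r\EE[X_0]\leq rL\leq u$, Lemma~\ref{lem:ssLikeForQM} immediately gives $\EE[X_n]\leq u/(n+r)$ for all $n$, which is the first claim.

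For the probabilistic claim, since $u_nV_n\geq0$ the hypothesis also entails $\EE[X_{n+1}\mid\mathcal{F}_n]\leq(1+a_n)X_n+C_n$ a.s., so (using $\sum_i\EE[C_i]\leq\sum_i d/(i+r)^2<\infty$) the process
\[
U_n:=\frac{X_n}{B_{n-1}}+\EE\left[\sum_{i=n}^\infty\frac{C_i}{B_i}\mid\mathcal{F}_n\right],\qquad B_j:=\prod_{i=0}^j(1+a_i),\quad B_{-1}:=1,
\]
is a well-defined nonnegative supermartingale, exactly as computed in the proof of Theorem~\ref{thm:RatesGeneral}. Using $B_j\geq1$, the in-mean bound just established, and the elementary estimate $\sum_{i=n}^\infty(i+r)^{-2}\leq 2/(n+r)$ (valid for $n\in\NN$ and $r\geq1$ by comparison with a telescoping sum), one obtains $\EE[U_n]\leq\EE[X_n]+\sum_{i=n}^\infty\EE[C_i]\leq(u+2d)/(n+r)$. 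Finally, because $B_{m-1}<K$ gives $X_m/K\leq X_m/B_{m-1}\leq U_m$ for every $m$, we have $\{\exists m\geq n\,(X_m\geq\varepsilon)\}\subseteq\{\sup_{m\geq n}U_m\geq\varepsilon/K\}$, and applying Ville's inequality to the shifted nonnegative supermartingale $(U_{n+k})_{k\in\NN}$ bounds the probability of this event by $K\EE[U_n]/\varepsilon\leq\frac{1}{\varepsilon}\cdot\frac{K(u+2d)}{n+r}$, which is the second claim.

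The main effort here is bookkeeping rather than invention: one must make sure the averaged recursion has exactly the form $x_{n+1}\leq(1-c/(n+r))x_n+d/(n+r)^2$ required by Lemma~\ref{lem:ssLikeForQM}, and check that the tail $\sum_{i\geq n}\EE[C_i]$ decays at the same $O(1/(n+r))$ rate as $\EE[X_n]$ so that it can be absorbed into $\EE[U_n]$ without worsening the rate. The only mild subtlety is the passage to the time-shifted supermartingale $(U_{n+k})_k$ needed to cast Ville's inequality in the ``$\exists m\geq n$'' form.
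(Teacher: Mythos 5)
Your proposal is correct and follows essentially the same route as the paper's proof: integrate the recursion, absorb $-u_n\EE[V_n]$ via $\EE[V_n]\geq t\EE[X_n]$ and the step-size condition to invoke Lemma \ref{lem:ssLikeForQM}, then build the same supermartingale $U_n$ and apply Ville's inequality with the identical tail estimate $\sum_{i\geq n}\EE[C_i]\leq 2d/(n+r)$. The remark about shifting to $(U_{n+k})_k$ before applying Ville's inequality is a minor technical point the paper leaves implicit.
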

\begin{proof}
Integrating the inequality, we get
\begin{align*}
\EE[X_{n+1}]&\leq (1+a_n)\EE[X_n]-u_n\EE[V_n]+\EE[C_n]\\
&\leq (1+a_n-tu_n)\EE[X_n]+\EE[C_n]\\
&\leq \left(1-\frac{c}{n+r}\right)\EE[X_n]+\frac{d}{(n+r)^2}.
\end{align*}
Applying Lemma \ref{lem:ssLikeForQM} yields the rate for $\EE[X_n]$. For the second claim, we proceed similar to the proof of Theorem \ref{thm:RatesGeneral}. Concretely, note first that
\[
\EE[C_n]=\frac{d}{(n+r)^2}\leq \frac{2d}{(n+r)(n+r+1)}=2d\left(\frac{1}{n+r}-\frac{1}{n+r+1}\right)
\]
so that
\[
\sum_{i=n}^\infty\EE[C_i]\leq 2d\sum_{i=n}^\infty\left(\frac{1}{n+r}-\frac{1}{n+r+1}\right)=\frac{2d}{n+r}
\]
Now defining 
\[
U_{n}:=\frac{X_{n}}{b_{n-1}}+\EE\left[\sum_{i=n}^\infty \frac{C_i}{b_i}\mid \mathcal{F}_n\right], \text{ where }b_j:=\prod_{i=0}^j (1+a_i)
\]
with $b_{-1}:=1$, analogously to Theorem \ref{thm:RatesGeneral}, we have that $(U_n)$ is a supermartingale. Since 
\[
\EE[U_n]=\frac{\EE[X_n]}{b_{n-1}}+\sum_{i=n}^\infty\frac{\EE[C_i]}{b_i}\leq \EE[X_n]+\sum_{i=n}^\infty \EE[C_i]\leq \frac{u+2d}{n+r},
\]
using Ville's inequality, this yields 
\[
\PP\left(\exists m\geq n(X_m\geq \varepsilon)\right)\leq \PP\left(\exists m\geq n(U_m\geq \varepsilon/K)\right)\leq \frac{K}{\varepsilon}\cdot \EE[U_n]\leq \frac{1}{\varepsilon}\cdot\frac{K(u+2d)}{n+r}.\qedhere
\]
\end{proof}

\section{Applications to Dvoretzky's theorem}\label{sec:dvo}

A notable application of the Robbins-Siegmund theorem, as already highlighted in the original paper \cite{robbins-siegmund:71:lemma}, is a generalization of the seminal theorem of Dvoretzky \cite{dvoretsky:56:stochastic} in stochastic approximation to Hilbert spaces.\footnote{Dvoretzky later offered an alternative proof of this generalization in \cite{dvoretzky1986stochastic}, see also \cite{venter1966dvoretzky}.} Concretely, the following theorem is established in \cite{robbins-siegmund:71:lemma}:

\begin{theorem}[\cite{robbins-siegmund:71:lemma}, based on \cite{dvoretsky:56:stochastic}]\label{thm:dvo}
Let $X$ be a separable Hilbert space and let, for any $n\geq 1$, $T_n:X^n\to X$ be a Borel-measurable function such that there exists a point $z\in X$ along with sequences of nonnegative real numbers $(a_n),(b_n),(c_n)$ such that
\[
\norm{T_n(z_0,\dots,z_{n-1})-z}\le \max\{a_{n-1}, (1+b_{n-1})\norm{z_{n-1}-z} -c_{n-1}\}
\]
for all $z_0,\dots,z_{n-1}\in X^n$. Fix $X$-valued random variables $x_0$ and $(y_n)$ such that $\EE[y_{n}\mid \mathcal{F}_{n}]=0$ for each $n\in\mathbb{N}$, where $\mathcal{F}_n:= \sigma(x_0,y_0,\dots,y_{n-1})$, i.e.\ the $\sigma$-algebra generated by $x_0,y_0,\dots,y_{n-1}$. In $x_0$ and $(y_n)$, define the iteration
\[
x_{n+1}:= T_{n+1}(x_0,\dots,x_{n}) + y_n.
\]
Finally, suppose that $a_n \to 0$, $\sum_{n=0}^\infty b_n<\infty$, $\sum_{n=0}^\infty c_n = \infty$ and $\sum_{n=0}^\infty\EE[\norm{y_n}^2]< \infty$. Then $x_n\to z$ a.s.
\end{theorem}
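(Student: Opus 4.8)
\emph{Approach.} The plan is to derive this from the Robbins--Siegmund theorem (Theorem~\ref{res:RS:original}), essentially along the lines of the original argument of \cite{robbins-siegmund:71:lemma}, applied to the squared distance $X_n:=\norm{x_n-\theta}^2$. First one records the routine preliminaries: inductively $x_n$ is $\mathcal{F}_n$-measurable (from $x_{n+1}=T_{n+1}(x_0,\dots,x_n)+y_n$ and $\mathcal{F}_{n+1}=\sigma(x_0,y_0,\dots,y_n)$), hence so is $T_{n+1}(x_0,\dots,x_n)-\theta$, and integrability of $X_n$ follows inductively from $\sum_n\EE[\norm{y_n}^2]<\infty$ together with the growth bound on $T_n$. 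The central observation is that, expanding in the Hilbert space,
\[
\norm{x_{n+1}-\theta}^2=\norm{T_{n+1}(x_0,\dots,x_n)-\theta}^2+2\langle T_{n+1}(x_0,\dots,x_n)-\theta,\,y_n\rangle+\norm{y_n}^2,
\]
so that, since $T_{n+1}(x_0,\dots,x_n)-\theta$ is $\mathcal{F}_n$-measurable and $\EE[y_n\mid\mathcal{F}_n]=0$, the cross term vanishes under $\EE[\cdot\mid\mathcal{F}_n]$ and one is left with $\EE[X_{n+1}\mid\mathcal{F}_n]=\norm{T_{n+1}(x_0,\dots,x_n)-\theta}^2+\EE[\norm{y_n}^2\mid\mathcal{F}_n]$.

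\emph{Bounding the deterministic part.} One then squares the hypothesis on $T_{n+1}$ and argues by cases: when $\norm{x_n-\theta}>\varepsilon$ for a fixed threshold $\varepsilon$ and $n$ is large enough that $a_n\le\varepsilon$, the term $a_n$ in the maximum is either dominated by the contractive alternative or already yields $\norm{T_{n+1}(x_0,\dots,x_n)-\theta}\le a_n<\norm{x_n-\theta}$; using in addition $c_n\le(1+b_n)\norm{x_n-\theta}$ in the relevant branch, one obtains in this regime an estimate of Robbins--Siegmund shape
\[
\EE[X_{n+1}\mid\mathcal{F}_n]\le(1+b_n)^2X_n-B_n+\EE[\norm{y_n}^2\mid\mathcal{F}_n],
\]
with $B_n\ge 0$, and where $B_n$ equals $c_n\norm{x_n-\theta}$ unless the maximum $\max\{a_n,(1+b_n)\norm{x_n-\theta}-c_n\}$ equals $a_n$. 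Here $A_n:=2b_n+b_n^2$ satisfies $\sum_nA_n<\infty$ (as $\sum_nb_n<\infty$, so $b_n$ is bounded), and $\sum_n\EE[\norm{y_n}^2\mid\mathcal{F}_n]<\infty$ a.s.

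\emph{Main obstacle and the finish.} The delicate point is that $a_n\to 0$ does \emph{not} imply $\sum_na_n^2<\infty$, so inside the $\varepsilon$-ball around $\theta$ the above estimate degrades --- one only controls $\norm{T_{n+1}(x_0,\dots,x_n)-\theta}^2$ by roughly $(1+b_n)^2\varepsilon^2$ --- and Theorem~\ref{res:RS:original} cannot be applied verbatim to $X_n$ itself. The standard way around this is a threshold/stopping argument. Fix $\varepsilon>0$, choose $N$ with $a_n\le\varepsilon/2$ for $n\ge N$, and use Borel--Cantelli together with $\sum_n\EE[\norm{y_n}^2]<\infty$ to pass to the a.s.\ event on which $\norm{y_n}\le\varepsilon/2$ for all large $n$. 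Apply Theorem~\ref{res:RS:original} to the process $X_{n\wedge\sigma}$ stopped at $\sigma:=\inf\{n\ge N:\norm{x_n-\theta}\le\varepsilon\}$: on $\{n<\sigma\}$ the displayed descent estimate holds with no stray error term, so one obtains both a.s.\ convergence of $X_{n\wedge\sigma}$ and $\sum_{n<\sigma}B_n<\infty$ a.s. On $\{\sigma=\infty\}$, however, $\norm{x_n-\theta}>\varepsilon$ for all $n\ge N$, and for large $n$ the maximum cannot be attained at $a_n$ (otherwise $\norm{x_{n+1}-\theta}\le a_n+\norm{y_n}\le\varepsilon$, contradicting $\sigma=\infty$), so there $B_n=c_n\norm{x_n-\theta}\ge c_n\varepsilon$, whence $\sum_{n<\sigma}B_n=\infty$ by $\sum_nc_n=\infty$ --- a contradiction. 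Thus $\sigma<\infty$ a.s.; running the same reasoning from each time $m\ge N$ in place of $N$ shows that $\norm{x_n-\theta}\le\varepsilon$ infinitely often a.s. A routine complementary argument (the same contraction-outside-the-ball estimate applied to excursions, again using $\norm{y_n}\to 0$ a.s.\ and $\prod_n(1+b_n)<\infty$) upgrades this to $\limsup_n\norm{x_n-\theta}\le c\varepsilon$ a.s.\ for a constant $c$ not depending on $\varepsilon$. Intersecting over $\varepsilon\in\{1/k:k\ge 1\}$ gives $x_n\to\theta$ a.s.
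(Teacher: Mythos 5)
Your reduction to the (stopped) Robbins--Siegmund theorem is sound up to and including the conclusion that $\norm{x_n-\theta}\le\varepsilon$ infinitely often a.s.: the case analysis on the maximum, the absorption of the $c_n^2$ term via $c_n\le(1+b_n)\norm{x_n-\theta}$ in the relevant branch, and the contradiction on $\{\sigma=\infty\}$ all check out, and this stopping-time route to the $\liminf$ statement is genuinely different from (and arguably more elementary than) what the paper does. The gap is the final upgrade from ``$\le\varepsilon$ infinitely often'' to $\limsup_n\norm{x_n-\theta}\le c\varepsilon$, which is not routine --- it is the crux of the theorem. The pathwise excursion estimate you invoke gives, for an excursion starting at a time $n_1$ with $\norm{x_{n_1}-\theta}\le\varepsilon$,
\[
\norm{x_{n_1+k}-\theta}\;\le\;K\varepsilon+K\sum_{j=0}^{k-1}\norm{y_{n_1+j}},\qquad K:=\prod_{n}(1+b_n),
\]
but the hypothesis only yields $\sum_n\EE[\norm{y_n}^2]<\infty$, hence $\sum_n\norm{y_n}^2<\infty$ and $\norm{y_n}\to0$ a.s., \emph{not} $\sum_n\norm{y_n}<\infty$ (think $\norm{y_n}\sim 1/n$). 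Over a long excursion the additive noise therefore accumulates to an amount bounded away from $0$ independently of $\varepsilon$, so no constant $c$ exists. Passing to conditional expectations does not rescue the sketch either: a Ville-type bound started at a return time shows that a single excursion exceeds level $\lambda$ with probability $O(\varepsilon^2/\lambda^2)$, a constant, and with infinitely many excursions no Borel--Cantelli argument closes the loop.

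What is needed is a process satisfying the almost-supermartingale inequality at \emph{all} times $n\ge N$, not only outside the $\varepsilon$-ball, so that the a.s.-convergence half of Robbins--Siegmund (which your argument never gets to exploit for the unstopped process) controls the excursions; combined with $\liminf=0$ this forces the limit to be $0$. This is exactly the device of \cite{robbins-siegmund:71:lemma}, reproduced in the paper's proof of the quantitative version: one works with the truncated process $W_{n,\delta}:=\bigl(\bigl(\norm{x_n-\theta}-\delta\bigr)^+\bigr)^2$, for which the $a_n$-branch of the hypothesis is absorbed once $a_n\le\delta$, and introduces the auxiliary $\mathcal{F}_n$-measurable vector $u_n$ (the radial truncation of $T_{n+1}(x_0,\dots,x_n)-\theta$ to the ball of radius $\delta$) so that $W_{n+1,\delta}\le\norm{x_{n+1}-\theta-u_n}^2$ while the cross term with $y_n$ still vanishes under $\EE[\,\cdot\mid\mathcal{F}_n]$. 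This yields a genuine Robbins--Siegmund inequality for $W_{N+n,\delta}$ with decrement $2(1+b_n)c_n\sqrt{W_{N+n,\delta}}$; a.s.\ convergence of $W_{n,\delta}$ together with $\sum_n c_n=\infty$ gives $W_{n,\delta}\to0$ a.s., i.e.\ $\limsup_n\norm{x_n-\theta}\le\delta$, and one intersects over $\delta=1/k$. Without a truncation of this kind your proof does not close.
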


A number of stochastic approximation algorithms take the form of the general iterative procedure of the above theorem, most notably the Kiefer-Wolfowitz scheme \cite{kiefer-wolfowitz:52:scheme} and (many variants of) the Robbins-Monro procedure \cite{Robbins1951}. Even further, Dvoretzky's result presented general conditions on the parameters of the previously introduced iterative scheme that unified many stochastic approximation results in the literature.

We now provide a quantitative version of Dvoretzky's theorem by combining an analysis of the proof of this result given in \cite{robbins-siegmund:71:lemma} with the quantitative theorems we established so far. We make a few careful choices in our treatment of the assumptions of Theorem \ref{thm:dvo}. First, as in the proof presented in \cite{robbins-siegmund:71:lemma}, the sequence $(c_n)$ can be assumed, without loss of generality, to also satisfy $\sum_{n=0}^\infty c_n^2 < \infty$, and accordingly, the result presented below will feature (a quantitative rendering of) this additional assumption. Second, note that as an immediate consequence of the assumptions Theorem \ref{thm:dvo} we have $\EE[\norm{x_n}^2]<\infty$ for all $n\in\mathbb{N}$. While we could calculate a bound on these means in terms of the other parameters recursively in $n\in\mathbb{N}$, for simplicity we will just directly assume bounds $L_n>0$ on these means as primitive inputs.

Our quantitative version of Dvoretzky's theorem now takes the form of Theorem \ref{thm:quantDvo} below, and the proof heavily relies on the fact that our general result, as well as our quantitative Robbins-Siegmund theorem, allows for processes slowed down by a s.i.c.c.\ function, which here will be instantiated by the square root. Only the presence of that function are we able to produce a corresponding modulus of regularity.

\begin{theorem}\label{thm:quantDvo}
Let $X$ be a separable Hilbert space and let, for any $n\geq 1$, $T_n:X^n\to X$ be a Borel-measurable function such that there exists a point $z\in X$ along with sequences of nonnegative real numbers $(a_n),(b_n),(c_n)$ such that
\[
\norm{T_n(z_0,\dots,z_{n-1})-z}\le \max\{a_{n-1}, (1+b_{n-1})\norm{z_{n-1}-z} -c_{n-1}\}
\]
for all $z_0,\dots,z_{n-1}\in X^n$. Fix $X$-valued random variables $x_0$ and $(y_n)$ such that $\EE[y_{n}\mid \mathcal{F}_{n}]=0$ for each $n\in\mathbb{N}$, where $\mathcal{F}_n:= \sigma(x_0,y_0,\dots,y_{n-1})$, i.e.\ $\mathcal{F}_n$ is the $\sigma$-algebra generated by $x_0,y_0,\dots,y_{n-1}$. In $x_0$ and $(y_n)$, define the iteration
\[
x_{n+1}:= T_{n+1}(x_0,\dots,x_{n}) + y_n.
\]
Suppose that $a_n \to 0$, $\sum_{n=0}^\infty b_n<\infty$, $\sum_{n=0}^\infty c^2_n<\infty$ and $\sum_{n=0}^\infty\EE[\norm{y_n}^2]< \infty$ with upper bounds $A,B,C,M>0$ and rates of convergence $\varphi,\beta,\gamma,\mu:(0,\infty)\to\mathbb{N}$, i.e.\
\[
\forall n\geq \varphi(\varepsilon)\left( a_n<\varepsilon\right)\text{ as well as }\sum_{n=\beta(\varepsilon)}^\infty b_n, \sum_{n=\gamma(\varepsilon)}^\infty c^2_n,\sum_{n=\mu(\varepsilon)}^\infty\EE[\norm{y_n}^2]< \varepsilon
\]
for any $\varepsilon>0$. Further, assume $\sum_{n=0}^\infty c_n = \infty$ with a rate of divergence $\theta:\mathbb{N}\times (0,\infty)\to\mathbb{N}$, i.e.\ $\sum_{n=k}^{\theta(k,b)}c_n\geq b$ for any $k\in\mathbb{N}$ and $b>0$. Lastly, assume that $\EE[\norm{x_n}^2]<\infty$ for all $n\in\mathbb{N}$ with upper bounds $L_n>0$. Then $x_n\to z$ a.s.\ with a rate
\[
\rho(\lambda,\varepsilon):= \theta\left(\max\left\{\chi_{\varepsilon/2}\left(\frac{\lambda^2\varepsilon}{4K_{\varepsilon/2}}\right),\varphi(\varepsilon/2)\right\}, \frac{2K_{\varepsilon/2}\sqrt{K_{\varepsilon/2}}(L_{\varphi(\varepsilon/2)}+M_{\varepsilon/2})}{\lambda\varepsilon} \right) 
\]
where, for any $\delta>0$, we define
\begin{gather*}
K_\delta:=(1+B^2)e^{\delta B},\\
M_\delta:=(1+\delta B)C+\delta(1+\delta B)B+M,\\
\chi_\delta(\varepsilon):=\max\left\{\mu\left(\frac{\varepsilon}{3}\right),\gamma\left(\frac{\varepsilon}{3(1+\delta B)}\right),\beta\left(\frac{\varepsilon}{3\delta(1+\delta B)}\right)\right\}.
\end{gather*}
\end{theorem}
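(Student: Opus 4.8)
The plan is to reduce this to the quantitative Robbins-Siegmund theorem (Theorem \ref{res:RS}) applied with the slowdown function $f = \sqrt{\cdot}$, which is s.i.c.c.\ with moduli $\psi(a) = \sqrt{a}$ and $\kappa(\varepsilon) = \varepsilon^2$ by Example \ref{ex:cisc}(1). The first step is to follow the proof from \cite{robbins-siegmund:71:lemma} to establish the relaxed supermartingale inequality for $X_n := \norm{x_n - \theta}^2$. From the defining bound on $T_n$ and the fact that $\EE[y_n \mid \mathcal{F}_n] = 0$, expanding $\norm{x_{n+1} - \theta}^2 = \norm{T_{n+1}(x_0,\dots,x_n) - \theta}^2 + 2\langle T_{n+1}(x_0,\dots,x_n) - \theta, y_n\rangle + \norm{y_n}^2$ and taking conditional expectation kills the cross term; bounding the first term using $\max\{a_n, (1+b_n)\norm{x_n-\theta} - c_n\}^2 \leq a_n^2 + (1+b_n)^2\norm{x_n-\theta}^2 - 2c_n(1+b_n)\norm{x_n - \theta} + c_n^2$ (handling the two cases of the max separately and absorbing), one arrives at an inequality of the shape
\[
\EE[X_{n+1} \mid \mathcal{F}_n] \leq (1 + \tilde{a}_n) X_n - u_n V_n + \tilde{C}_n \text{ a.s.},
\]
where one natural choice is $u_n := c_n$, $V_n := 2(1+b_n)\norm{x_n - \theta} = 2(1+b_n)\sqrt{X_n}$, $\tilde{a}_n := b_n^2 + 2b_n$ (so that $\prod(1+\tilde a_n) = \prod(1+b_n)^2$), and $\tilde{C}_n := a_n^2 + c_n^2 + \EE[\norm{y_n}^2]$; the eventual smallness of $a_n$ (via $\varphi$) is needed to control the $a_n^2$ contribution.

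The second step is to verify that all the hypotheses of Theorem \ref{res:RS} hold with explicit moduli. The perturbation bound is $\prod_{i=0}^\infty (1 + \tilde a_i) \leq \left(\prod_i (1+b_i)\right)^2 \leq (e^{B})^2$; more precisely, matching the definition of $K_\delta = (1+B^2)e^{\delta B}$ in the statement suggests splitting off the first $\varphi(\varepsilon/2)$ terms and bounding the tail product, which is where the $e^{\delta B}$ factor and the shift to $L_{\varphi(\varepsilon/2)}$ come from. For the error series one uses $\sum a_i^2$, $\sum c_i^2 < C$, and $\sum \EE[\norm{y_i}^2] < M$ together with the rates $\varphi, \gamma, \mu$ to build the $\chi_\delta$ modulus displayed in the statement (the three-way $\max$ with the $\frac{\varepsilon}{3(\cdots)}$ arguments reflecting the three-term decomposition of $\tilde C_n$, after factoring through $(1 + \delta B)$). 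The divergence modulus $\theta$ for $\sum u_i = \sum c_i$ is given directly by hypothesis. Finally, the regularity modulus: since $V_n = 2(1+b_n)\sqrt{X_n} \geq 2\sqrt{X_n}$ a.s., one gets $\EE[V_n] \geq 2\,\EE[\sqrt{X_n}] = 2\,\EE[f(X_n)]$, so $\tau(\varepsilon) := 2\varepsilon$ works — this is a \emph{linear} regularity modulus, obtained precisely because we slowed the process down by the root, exactly as advertised in the text preceding the theorem.

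The third step is bookkeeping: feed $f = \sqrt{\cdot}$, $\psi(a) = \sqrt{a}$, $\kappa(\varepsilon) = \varepsilon^2$, $\tau(\varepsilon) = 2\varepsilon$, the upper bound $K_{\varepsilon/2}$ for the perturbation product, $L_{\varphi(\varepsilon/2)} + M_{\varepsilon/2}$ for the relevant $L + M$, and the moduli $\chi_{\varepsilon/2}$, $\theta$ into the rate $\rho'(\lambda, \varepsilon) = \rho(\lambda f(\varepsilon))$ from Theorem \ref{res:RS}, and check that the resulting expression simplifies to the stated $\rho(\lambda, \varepsilon)$; in particular $f(X_n) = \sqrt{\norm{x_n - \theta}^2} = \norm{x_n - \theta}$, so an a.s.\ rate for $X_n \to 0$ is literally an a.s.\ rate for $x_n \to \theta$, and the factor $\psi(K^{-1}) = K^{-1/2}$ together with $\kappa$ and $\tau$ being quadratic resp.\ linear produces the $\lambda^2 \varepsilon / (4 K_{\varepsilon/2})$ and $K_{\varepsilon/2}\sqrt{K_{\varepsilon/2}}$ terms visible in the statement. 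The main obstacle I anticipate is \emph{not} any single deep step but rather the careful propagation of constants: getting the case analysis in the squared-max bound to produce exactly the claimed $\tilde a_n$, $\tilde C_n$, and $V_n$, and then ensuring the composition of $\varphi' $ (from Lemmas \ref{lem:qihou} and \ref{sumconv}) with $\tau$ and the outer substitutions in Theorem \ref{res:RS} collapses to the precise formula for $\rho$ — the role of $a_n \to 0$ (needing $n \geq \varphi(\varepsilon/2)$ before the argument "starts", hence the $\max$ with $\varphi(\varepsilon/2)$ and the use of $L_{\varphi(\varepsilon/2)}$ as the effective initial-value bound) is the one genuinely non-mechanical design choice.
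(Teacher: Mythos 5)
Your overall strategy is the right one and matches the paper's: reduce to Theorem \ref{res:RS} with the slowdown $f=\sqrt{\cdot}$, take $V_n$ proportional to $\sqrt{X_n}$ and $u_n$ proportional to $c_n$ so that the regularity modulus is linear, and read off the a.s.\ rate. However, there is a genuine gap in the first step: your decomposition leaves $a_n^2$ inside the error term $\tilde C_n = a_n^2+c_n^2+\EE[\norm{y_n}^2]$, and Theorem \ref{res:RS} requires $\sum_n \EE[\tilde C_n]<\infty$ with a rate. The hypotheses only give $a_n\to 0$ (with rate $\varphi$) and an upper bound $A$ --- \emph{not} $\sum_n a_n^2<\infty$ (take $a_n=1/\log(n+2)$). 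Starting the iteration at $N=\varphi(\delta)$ does not repair this: for $n\geq N$ you only get $a_n^2<\delta^2$, and $\sum_{n\geq N}\delta^2=\infty$. So the ``eventual smallness of $a_n$ controls the $a_n^2$ contribution'' step fails.

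The missing idea is to eliminate the $a$-branch of the max \emph{exactly} rather than bound it. The paper does this by fixing $\delta>0$ and working with the truncated process
\[
W_{n,\delta}:=\bigl(\bigl(\norm{x_n-\theta}-\delta\bigr)^+\bigr)^2
\]
(together with a clipped version $u_n$ of $T_{n+1}-\theta$ to control the cross term), and starting the recursion at $N=\varphi(\delta)$. Then $a_{N+n}-\delta\leq 0$, so the term $\max\{a_{N+n}-\delta,\,\cdot\,,0\}$ loses its $a$-branch entirely and contributes nothing to the error; what remains is a Robbins--Siegmund inequality for $X_n:=W_{N+n,\delta}$ with $V_n=\sqrt{W_{N+n,\delta}}$, $u_n=2(1+b_n)c_n$, perturbation $(1+\delta b_n)(1+b_n)^2-1$ and error $(1+\delta b_n)c_n^2+\delta b_n(1+\delta b_n)+\EE[\norm{y_n}^2\mid\mathcal F_n]$ --- which is where the $\delta$-dependent quantities $K_\delta$, $M_\delta$, $\chi_\delta$ in the statement actually come from (note your $\tilde a_n$ and $\tilde C_n$ contain no $\delta$, which is already a sign they cannot produce the stated rate). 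At the end one sets $\delta=\varepsilon/2$ and uses that $\norm{x_n-\theta}\geq\varepsilon$ is equivalent to $W_{n,\varepsilon/2}\geq\varepsilon^2/4$, which explains the $\lambda^2\varepsilon/4$-type arguments in $\rho$. The truncation also requires a non-trivial case analysis on the sign of $\norm{T_{N+n+1}-\theta}-\delta$ and the elementary inequality $(x+y)^2\leq(1+y)x^2+y(1+y)$ to separate off the $\delta b_{N+n}$ perturbation; this is more than ``absorbing'' and is where the extra terms in $M_\delta$ originate. Everything after that point in your plan (the choice of $f$, $\psi$, $\kappa$, the linear $\tau$, and the final bookkeeping) is correct.
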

\begin{proof}
We reduce the result to the Robbins-Siegmund theorem, following (a slight modification of) the well-known argument given in \cite{robbins-siegmund:71:lemma}. For this, we first fix a $\delta > 0$ and, writing $(x)^+$ for $\max\{x,0\}$, we set
\[
W_{n,\delta} := \left(\left(\norm{x_n-z}- \delta\right)^+\right)^2
\]
as well as the $X$-valued random variables $T_n:= T_n(x_0,\ldots,x_{n-1})-z$ for $n\geq 1$ and
\[
u_n:= T_{n+1}\mathbf{1}_{\norm{T_{n+1}}\leq \delta} +\delta \frac{T_{n+1}}{\norm{T_{n+1}}}\mathbf{1}_{\norm{T_{n+1}}>\delta}
\]
for $n\in\mathbb{N}$. By definition, $T_n$ is $\mathcal{F}_{n-1}$-measurable and so $u_n$ is $\mathcal{F}_n$-measurable. Further, we immediately have $\norm{u_n}\leq\delta$ pointwise everywhere which implies
\[
W_{n+1,\delta}= \left(\left(\norm{x_{n+1}-z}- \delta\right)^+\right)^2\leq \left(\left(\norm{x_{n+1}-z-u_n}+\norm{u_n}- \delta\right)^+\right)^2\leq \norm{x_{n+1}-z-u_n}^2
\]
for all $n\in\mathbb{N}$. Further, note that by definition we have 
\[
T_{n+1}-u_n=\left(1-\frac{\delta}{\norm{T_{n+1}}}\right)T_{n+1}\mathbf{1}_{\norm{T_{n+1}}>\delta}
\]
so that $\norm{T_{n+1}-u_n}=(\norm{T_{n+1}}-\delta)^+$. Combined with the assumption that $\EE[y_n\mid\mathcal{F}_n]=0$, we can then derive that
\begin{align*}
\EE[W_{n+1,\delta}\mid \mathcal{F}_n] &\leq\EE[\norm{x_{n+1}-z-u_n}^2\mid \mathcal{F}_n] \\
&=\EE[\norm{ T_{n+1} + y_n-u_n}^2\mid \mathcal{F}_n]\\
&=\EE[\norm{T_{n+1} - u_n}^2\mid \mathcal{F}_n] + \EE[\norm{y_n}^2\mid\mathcal{F}_n] + 2\EE[\langle y_n, T_{n+1} - u_n\rangle\mid\mathcal{F}_n]\\
&=\norm{T_{n+1} - u_n}^2 + \EE[\norm{y_n}^2\mid\mathcal{F}_n]\\
&=\left(\left(\norm{T_{n+1}}- \delta\right)^+\right)^2  + \EE[\norm{y_n}^2\mid\mathcal{F}_n]
\end{align*}
for all $n\in\mathbb{N}$. Setting $N:= \varphi(\delta)$ yields that $a_{N+n}\leq \delta$ for all $n\in\mathbb{N}$, so that our main assumption on the mappings $T_n$ yield
\begin{align*}
\left(\norm{T_{N+n+1}}- \delta\right)^+ &\le  \max\{a_{N+n} -\delta, (1+b_{N+n})\norm{x_{N+n}-z} -c_{N+n}- \delta,0\}\\
&\le\max\{(1+b_{N+n})\norm{x_{N+n}-z} -c_{N+n}- \delta,0\}\\
&=\left( (1+b_{N+n})(\norm{x_{N+n}-z}- \delta) -c_{N+n} +\delta b_{N+n}\right)^+.
\end{align*}
Now, either we have $\norm{T_{N+n+1}}>\delta$ so that $\left(\norm{T_{N+n+1}}- \delta\right)^+=\norm{T_{N+n+1}}- \delta>0$ and therefore
\begin{align*}
0<\left(\norm{T_{N+n+1}}-\delta \right)^+&\le \left( (1+b_{N+n})(\norm{x_{N+n}-z}- \delta) -c_{N+n} +\delta b_{N+n}\right)^+\\
&=(1+b_{N+n})(\norm{x_{N+n}-z}- \delta) -c_{N+n} +\delta b_{N+n}\\
&\leq (1+b_{N+n})(\norm{x_{N+n}-z}- \delta)^+ -c_{N+n} +\delta b_{N+n}
\end{align*}
which yields
\[
\left(\left(\norm{T_{N+n+1}}-\delta \right)^+\right)^2 \le \left( (1+b_{N+n})(\norm{x_{N+n}-z}- \delta)^+ -c_{N+n} +b_{N+n}\delta\right)^2
\]
in this case. However, if $\norm{T_{N+n+1}}\leq \delta$, then it holds that $\left(\norm{T_{N+n+1}}- \delta\right)^+=0$ and so the above inequality is true unconditionally. Finally, from this we get that
\begin{align*}
\left(\left(\norm{T_{N+n+1}}-\delta\right)^+\right)^2 \le &\left(1+\delta b_{N+n}\right)(1+b_{N+n})^2W_{N+n,\delta} - 2(1+b_{N+n})c_{N+n}\sqrt{W_{N+n,\delta}}\\
&+ \left(1+\delta b_{N+n}\right)c_{N+n}^2 +\delta b_{N+n}\left(1+\delta b_{N+n}\right).
\end{align*}
which we derive utilizing that $(x+y)^2\le (1+y)x^2+y(1+y)$ for any $x\in\mathbb{R}$ and $y\geq 0$, instantiated with
\[
x:= (1+b_{N+n})(\norm{x_{N+n}}- \delta)^+ -c_{N+n}=(1+b_{N+n})\sqrt{W_{N+n,\delta}} -c_{N+n}
\]
as well as $y:= \delta b_{N+n}$, and noting that $1+\delta b_{N+n}\geq 0$.

All in all, we have derived that 
\[
\EE[W_{N+n+1,\delta}\mid \mathcal{F}_{N+n}]\leq (1+\alpha_{N+n,\delta})W_{N+n,\delta} - u_{N+n}\sqrt{W_{N+n,\delta}} + C_{N+n,\delta}
\]
holds for all $n\in\mathbb{N}$, where $u_n:= 2(1+b_n)c_n$ as well as $\alpha_{n,\delta}:=\left(1+\delta b_n\right)(1+b_n)^2-1$ and 
\[
C_{n,\delta}:= \left(1+\delta b_n\right)c_n^2 +\delta b_n\left(1+\delta b_n\right) + \EE[\norm{y_{n}}^2\mid \mathcal{F}_{n}].
\]
It is elementary to verify that, by construction, we have $\sum_{n=0}^\infty \EE[C_{N+n,\delta}]<\infty$ with a bound $M_\delta$ and rate of convergence $\max\{\chi_\delta(\varepsilon)-N,0\}$. Further, we have $\prod_{n=0}^\infty (1+\alpha_{N+n,\delta})<\infty$ with a bound $K_\delta$ and it can be immediately verified that $\theta(N+n,b)-N$ is a rate of divergence for $(u_{N+n})$.

Hence, we can apply Theorem \ref{thm:RS} with $X_n:=W_{N+n,\delta}$, $V_n:=\sqrt{W_{N+n,\delta}}$ and $\mathcal{F}_n:=\mathcal{F}_{N+n}$ (as well as $f(x):=\sqrt{x}$ so that we set $\psi(a):=\sqrt{a}$ and $\kappa(\varepsilon):= \varepsilon^2$ following Example \ref{ex:cisc}, (1)) to derive that $W_{N+n,\delta}\to 0$ a.s.\ with a certain rate $\Delta^\delta(\lambda,\varepsilon)$ arising from Theorem \ref{thm:RS}. At last, let $\varepsilon,\lambda>0$ be given. Then for $\delta=\varepsilon/2$, we obtain thereby that
\begin{align*}
&\PP\left(\exists n\geq\Delta^{\varepsilon/2}(\lambda,\varepsilon^2/4)+N\left( \norm{x_{n}-z} \ge \varepsilon\right)\right)\\
&\qquad=  \PP\left(\exists n\geq\Delta^{\varepsilon/2}(\lambda,\varepsilon^2/4) \left(\left(\left(\norm{x_{N+n}-z}- \varepsilon/2\right)^+\right)^2 \ge \varepsilon^2/4\right)\right)\\
&\qquad=  \PP\left(\exists n\geq\Delta^{\varepsilon/2}(\lambda,\varepsilon^2/4) \left(W_{N+n,\varepsilon/2} \ge \varepsilon^2/4\right)\right)<\lambda
\end{align*}
and so $\norm{x_n}\to 0$ a.s.\ with a rate given by 
\[
\rho(\lambda,\varepsilon)= \Delta^{\varepsilon/2}(\lambda,\varepsilon^2/4)+N=\Delta^{\varepsilon/2}(\lambda,\varepsilon^2/4)+\varphi(\varepsilon/2).
\]
The rate presented in the above theorem follows from simplifying the corresponding expressions arising from Theorem \ref{thm:RS}.
\end{proof}

It should be observed that there exist alternative proofs of Dvoretzky's theorem, notably that of Derman and Sacks \cite{DS1959}, which are more closely tailored to the specific assumptions of the theorem and do not make direct use of supermartingale convergence. A careful analysis of such proofs might result in a different rate of convergence and thus an alternative quantitative version of Dvoretzky's theorem to the one given here.\footnote{This observation is due to R. Arthan and P. Oliva (private communication), who are working on an analysis of the proof due to Derman and  Sacks.}

\section{Stochastic quasi-Fej\'er monotonicity in the presence of uniqueness}\label{sec:fejer}

We now consider one last abstract scenario commonly used to analyse stochastic approximation methods, for which we fix the following setup: For the rest of this section, let $(X,d)$ be a metric space, and consider the problem of finding a zero $F(z)=0$ of some general measurable function $F:X\to [0,\infty]$, assuming that the set of zeros $\mathrm{zer}F$ is nonempty.

We consider the general class of methods that are \emph{stochastically quasi-Fej\'er monotone}, that is sequences of $X$-valued random variables $(x_n)$ which satisfy the descent condition
\[
\EE[d(x_{n+1},z)\mid\mathcal{F}_n]\leq (1+\zeta_n)d(x_n,z)+\xi_n\text{ a.s.}
\]
for all $n\in\mathbb{N}$ and $z\in \mathrm{zer}F$, where $(\mathcal{F}_n)$ is a given filtration and $(\zeta_n),(\xi_n)$ are sequences of real-valued random variables with $\sum_{n=0}^\infty \xi_n,\sum_{n=0}^\infty \zeta_n<\infty$ a.s.

The notion of (quasi-)Fej\'er monotonicity plays a central role in the study of iterative methods in nonlinear analysis and optimization, and we refer to the well-known expositions in \cite{BC2017,Combettes2001,Com2009}, along with the more recent work in the deterministic setting \cite{KLN2018,KLAN2019,KP2025,Pischke2025a} (see also \cite{Pis2023}). The stochastic variants of quasi-Fej\'er monotonicity seem to have been first considered in the pioneering works of Ermol'ev \cite{Erm1969,Erm1971,ET1973}, which were much later refined and generalised from Euclidean spaces to separable Hilbert spaces \cite{CP2015,CP2019} (and recently, to nonlinear Hadamard spaces \cite{Pischke2026}). 

In order to induce convergence for Fej\'er monotone sequences, one normally requires some structure on the solution set $\mathrm{zer}F$, and the associated level sets of $F$ representing approximation solutions, such as (weak) compactness. In a situation that is broadly analogous to that of the Robbins-Siegmund theorem, effective ``full'' rates of convergence are not possible at this level of generality.\footnote{Recall footnote \ref{specker}, and further see the discussions in \cite{KLN2018,KLAN2019}. Quantitative results in such general circumstances, as given in \cite{KLN2018} for deterministic Fej\'er monotone sequences and by the authors in \cite{NeriPischkePowell2026} for a stochastic setting, again only provide weaker generalized oscillation bounds.} However, in many applications the solution set is actually a \emph{singleton}, and the main result of this section is that in this situation, rates of convergence can be explicitly constructed whenever the method produces arbitrarily good approximate solutions (as is widely the case) and a stochastic variant of the uniqueness of the solution can be quantitatively witnessed through a function $\tau:(0,\infty)\to (0,\infty)$ satisfying
\[
\forall x\in D\ \forall \varepsilon>0\left( \EE[F(x)]<\tau(\varepsilon)\to \EE[d(x,z)]<\varepsilon\right),
\]
where $D$ is a given collection of $X$-valued random variables. In this case we say that $F$ has a strongly unique zero in expectation. The corresponding modulus $\tau$ can hence be seen as an instantiation of our previous abstract notion of a regularity modulus, connecting the main sequences whose convergence we wish to establish, here $d(x_n,z)$, with an auxiliary one, here $F(x_n)$, so that we only require a weak approximation property that latter sequence to guarantee convergence.

In a deterministic setting, similar general quantitative considerations for Fej\'er monotone sequences are made in \cite{KLAN2019} (see also \cite{Pis2023,Pischke2025a}). In the light of that work, this notion strong uniqueness in expectation is a stochastic analog of the closely related notion of a modulus of uniqueness as considered in \cite{KLAN2019}, which is a special case of the abstract notion of a modulus of regularity introduced therein, that is a modulus $\tau:(0,\infty)\to (0,\infty)$ with the property that
\[
\forall x\in \overline{B}_b(z)\ \forall \varepsilon>0\left( F(x)<\tau(\varepsilon)\to \mathrm{dist}_{\mathrm{zer}F}(x)<\varepsilon\right),
\]
given some suitable region $\overline{B}_b(z)$ around a fixed solution $z$. This latter form of regularity covers essentially all known regularity-type notions from the literature of deterministic optimization (see \cite{KLAN2019}), and is moreover optimal in that it can be shown that already for the deterministic Picard iteration, a corresponding (deterministic) modulus of regularity can be constructed from an associated (uniform) rate of convergence for the process towards a solution (see Proposition 4.4 in \cite{KLAN2019}). Lifting this general regularity notion and its associated results on rates of convergence to a stochastic setting is far from trivial and forms the subject of a separate paper \cite{PischkePowell2026} by the second and third author, which in particular requires a careful use of measurable selection theory and a (slightly) more restricted notion of stochastic quasi-Fej\'er monotonicity. 

However, convergence of stochastic quasi-Fej\'er monotone sequences in the case that $F$ enjoys strong uniqueness in expectation, a setting which still covers a considerable range of problems (cf.\ Section \ref{sec:RM} below), can be dealt with immediately within the framework of the current paper, as we now show.

It turns out that most properties of the metric are inessential for our main arguments, and so we follow the approach of the recent work \cite{Pischke2025a} and consider more general distance functions $\phi:X\times X\to [0,\infty)$, which allow us to capture an even wider class of iterations. The main assumption we need to place on $\phi$ is the following property (also introduced in \cite{Pischke2025a}) that connects it back to the metric.

\begin{definition}
A mapping $\phi:X\times X\to [0,\infty)$ is called uniformly consistent if there exists a function $\kappa:(0,\infty)\to (0,\infty)$ such that
\[
\forall \varepsilon>0\ \forall x,y\in X\left(\phi(x,y)<\kappa(\varepsilon)\rightarrow d(x,y)<\varepsilon\right).
\]
Such a $\kappa$ is then called a modulus of uniform consistency for $\phi$.
\end{definition}

Such generalized distance functions in particular unify perturbations of the metric, like metric powers, with notions such as Bregman distances, and we refer to \cite{Pischke2025a} as well as \cite{PischkePowell2026} (and to recent applications \cite{Pis2025b,PK2024} featuring this or similar notions) for further illustrations on the breadth of this perspective. In the context of this modification of the distance function, we consider corresponding relativizations of the strong uniqueness in expectation as well as the stochastic quasi-Fej\'er monotonicity.

\begin{definition}
Let $D$ be a collection of $X$-valued random variables. Let $\phi:X\times X\to [0,\infty)$ be a mapping which is measurable in its first argument. We say that the zero $z$ of $F$ is strongly $\phi$-unique in expectation (over $D$) if there exists a function $\tau:(0,\infty)\to (0,\infty)$ such that
\[
\forall x\in D\ \forall \varepsilon>0\left( \EE[F(x)]<\tau(\varepsilon)\to \EE[\phi(x,z)]<\varepsilon\right).
\]
Such a $\tau$ is then called a modulus of strong $\phi$-uniqueness in expectation for $F$ (over $D$).
\end{definition}

\begin{definition}
Let $\phi:X\times X\to [0,\infty)$ be a mapping which is measurable in its first argument. Let $(\mathcal{F}_n)$ be a filtration and let $(x_n)$ be a sequence of $X$-valued $\mathcal{F}_n$-measurable random variables. Then $(x_n)$ is called stochastically $\phi$-quasi-Fej\'er monotone w.r.t.\ $S\subseteq X$ and $(\mathcal{F}_n)$ if
\[
\EE[\phi(x_{n+1},z)\mid\mathcal{F}_n]\leq (1+\zeta_n)\phi(x_n,z)+\xi_n\text{ a.s.}
\]
for all $z\in S$ and all $n\in\mathbb{N}$, where $(\xi_n),(\zeta_n)$ are suitable sequences of nonnegative, integrable real-valued random variables.
\end{definition}

In analogy to the non-stochastic case (see in particular \cite{KLAN2019}), the only further property needed to induce convergence is that the iteration has approximate zeros for the function $F$ infinitely often, which now needs to hold in expectation: 

\begin{definition}
Let $(x_n)$ be a sequence of $X$-valued random variables. We say that $(x_n)$ has the $\liminf$-property in expectation relative to $F$ if $\liminf_{n\to\infty}\EE[F(x_n)]=0$. A function $\varphi:(0,\infty)\times\mathbb{N}\to (0,\infty)$ witnessing this property quantitatively in the sense that
\[
\forall \varepsilon>0\ \forall N\in\mathbb{N}\ \exists n\in [N;\varphi(\varepsilon,N)]\left( \EE[F(x_n)]<\varepsilon\right)
\]
is called a $\liminf$-bound in expectation for $(x_n)$ relative to $F$.
\end{definition}

Under these main assumptions, together with some minor assumptions on surrounding (quantitative) data, we obtain the following result on the convergence of stochastically quasi-Fej\'er monotone sequences in the presence of uniqueness as a corollary of Theorem \ref{thm:RatesGeneral}.

\begin{theorem}\label{thm:Fejer}
Let $(X,d)$ be a metric space and let $\phi:X\times X\to [0,\infty)$ be a mapping which is measurable in its first argument. Let $F:X\to [0,\infty]$ be measurable with $\mathrm{zer}F=\{z\}$, where the zero $z$ is strongly $\phi$-unique in expectation over a collection $D$ of $X$-valued random variables with a modulus $\tau:(0,\infty)\to (0,\infty)$, i.e.
\[
\forall x\in D\ \forall \varepsilon>0\left( \EE[F(x)]<\tau(\varepsilon)\to \EE[\phi(x,z)]<\varepsilon\right).
\]
Let $(\mathcal{F}_n)$ be a filtration and let $(x_n)\subseteq D$ be a sequence of $X$-valued $\mathcal{F}_n$-measurable random variables such that $\phi(x_n,z)$ is integrable for all $n\in\mathbb{N}$. Suppose further that $(x_n)$ is stochastically $\phi$-quasi-Fej\'er monotone w.r.t.\ $\mathrm{zer}F$ and $(\mathcal{F}_n)$, i.e.\
\[
\EE[\phi(x_{n+1},z)\mid\mathcal{F}_n]\leq (1+\zeta_n)\phi(x_n,z)+\xi_n\text{ a.s.}
\]
for all $n\in\mathbb{N}$, where $(\xi_n),(\zeta_n)$ are sequences of integrable real-valued random variables such that there exist $K>0$ and $\chi:(0,\infty)\to\mathbb{N}$ with $\prod_{n=0}^\infty (1+\zeta_n)<K$ a.s.\ and $\sum_{n=\chi(\varepsilon)}^\infty \EE[\xi_n]<\varepsilon$ for all $\varepsilon>0$. Suppose furthermore that $(x_n)$ has the $\liminf$-property in expectation relative to $F$ with a bound $\varphi:(0,\infty)\times\mathbb{N}\to (0,\infty)$, i.e.
\[
\forall \varepsilon>0\ \forall N\in\mathbb{N}\ \exists n\in [N;\varphi(\varepsilon,N)]\left( \EE[F(x_n)]<\varepsilon\right).
\]
Then $\EE[\phi(x_n,z)]\to 0$ with rate
\[
\rho(\varepsilon):=\varphi\left(\tau\left(\frac{\varepsilon}{2K}\right),\chi\left(\frac{\varepsilon}{2K}\right)\right)
\]
and $\phi(x_n,z)\to 0$ a.s.\ with rate $\rho'(\lambda,\varepsilon):=\rho(\lambda\varepsilon)$. If furthermore $\phi$ is uniformly consistent with modulus $\kappa:(0,\infty)\to (0,\infty)$, i.e.
\[
\forall \varepsilon>0\ \forall x,y\in X\left(\phi(x,y)<\kappa(\varepsilon)\rightarrow d(x,y)<\varepsilon\right),
\]
then $d(x_n,z)\to 0$ a.s.\ with rate $\rho'(\lambda,\kappa(\varepsilon))$.
\end{theorem}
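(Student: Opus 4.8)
The plan is to reduce the statement to the master Theorem~\ref{thm:RatesGeneral}, instantiated with the nonnegative process $X_n := \phi(x_n,z)$, with perturbations $A_n := \zeta_n$ and errors $C_n := \xi_n$, and with the s.i.c.c.\ function $f$ taken to be the identity on $[0,\infty)$, which by Example~\ref{ex:cisc}(1) (case $q=1$) has moduli $\psi(a)=a$ and $\kappa(\varepsilon)=\varepsilon$. First I would verify the hypotheses of Theorem~\ref{thm:RatesGeneral}: the $X_n$ are nonnegative, integrable (by assumption), and adapted to $(\mathcal{F}_n)$ since $x_n$ is $\mathcal{F}_n$-measurable and $\phi$ is measurable; the almost-supermartingale inequality is literally the stochastic $\phi$-quasi-Fej\'er monotonicity of $(x_n)$ with respect to $\mathrm{zer}F=\{z\}$; and the conditions $\prod_{n=0}^\infty(1+\zeta_n)<K$ a.s.\ and $\sum_{n=\chi(\varepsilon)}^\infty\EE[\xi_n]<\varepsilon$ are exactly as assumed (and since $\zeta_n\geq 0$ the product is always at least $1$, so $K\geq 1$ may be taken for free, as required by Theorem~\ref{thm:RatesGeneral}).

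The one nontrivial ingredient is the $\liminf$-modulus for $(f(X_n))=(\phi(x_n,z))$ in expectation. This I would obtain by composition, exactly as in the discussion following Corollary~\ref{cor:qualCorMain}: since $(x_n)\subseteq D$, the modulus of strong $\phi$-uniqueness in expectation yields $\EE[F(x_n)]<\tau(\varepsilon)\to\EE[\phi(x_n,z)]<\varepsilon$ for every $n$, and chaining this with the given $\liminf$-bound $\varphi$ for $(F(x_n))$ shows that $(\varepsilon,N)\mapsto\varphi(\tau(\varepsilon),N)$ is a $\liminf$-modulus for $(\phi(x_n,z))$ in expectation. Feeding this modulus, together with $\psi(K^{-1})=K^{-1}$ and $\kappa=\mathrm{id}$, into the two rate expressions of Theorem~\ref{thm:RatesGeneral} and simplifying the resulting $\frac{\varepsilon\psi(K^{-1})}{2}=\frac{\varepsilon}{2K}$ gives precisely $\rho(\varepsilon)=\varphi(\tau(\varepsilon/2K),\chi(\varepsilon/2K))$ for $\EE[\phi(x_n,z)]\to 0$ and $\rho'(\lambda,\varepsilon)=\rho(\lambda f(\varepsilon))=\rho(\lambda\varepsilon)$ for $\phi(x_n,z)\to 0$ a.s.

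For the final metric conclusion under uniform consistency, I would argue directly: since $\rho'(\lambda,\cdot)$ is a rate for $\phi(x_n,z)\to 0$ a.s., substituting $\kappa(\varepsilon)$ into its second argument gives $\PP(\exists n\geq\rho'(\lambda,\kappa(\varepsilon))\,(\phi(x_n,z)\geq\kappa(\varepsilon)))<\lambda$, and the contrapositive of uniform consistency ($d(x,y)\geq\varepsilon\Rightarrow\phi(x,y)\geq\kappa(\varepsilon)$) shows that $\{\exists n\geq\rho'(\lambda,\kappa(\varepsilon))\,(d(x_n,z)\geq\varepsilon)\}$ is contained in that event, hence has probability $<\lambda$; so $\rho'(\lambda,\kappa(\varepsilon))$ is a rate for $d(x_n,z)\to 0$ a.s. Overall the proof is mostly bookkeeping; the step demanding the most care is matching the rate expressions produced by Theorem~\ref{thm:RatesGeneral} — in particular tracking how the composition of $\varphi$, $\tau$, $\chi$ and the factor $\frac{1}{2K}$ coming from $\psi$ and $\kappa$ collapses to the clean form stated here.
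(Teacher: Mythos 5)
Your proposal is correct and follows essentially the same route as the paper: instantiate Theorem~\ref{thm:RatesGeneral} with $X_n:=\phi(x_n,z)$, $A_n:=\zeta_n$, $C_n:=\xi_n$ and $f:=\mathrm{id}$, obtain the $\liminf$-modulus by composing $\varphi$ with $\tau$ as in the discussion following Corollary~\ref{cor:qualCorMain}, and transfer the almost-sure rate to $d(x_n,z)$ via the contrapositive of uniform consistency. The bookkeeping of the rate expressions (collapsing $\psi(K^{-1})/2$ to $1/2K$) matches the stated $\rho$ and $\rho'$ exactly.
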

\begin{proof}
Define $X_{n}:=\phi(x_n,z)$ for any $n\in\mathbb{N}$. As $(x_n)$ is adapted to $(\mathcal{F}_n)$ and $\phi$ is measurable in its first argument, also $(X_{n})$ is adapted to $(\mathcal{F}_n)$. As in the discussion after Theorem \ref{thm:RatesGeneral}, by the assumptions on $\varphi$ and $\tau$, we have that $\varphi(\tau(\varepsilon),N)$ is a $\liminf$-modulus for $\EE[X_{n}]=\EE[\phi(x_n,z)]$. For $A_n:=\zeta_n$ and $C_n:=\xi_n$ as well as $f:=\mathrm{id}$, Theorem \ref{thm:RatesGeneral} then yields the respective rates for $\EE[\phi(x_n,z)]\to 0$ and $\phi(x_n,z)\to 0$ a.s. The latter in particular means 
\[
\forall \varepsilon,\lambda>0\left(\PP(\exists n\geq \rho'(\lambda,\varepsilon)(\phi(x_n,z)\geq\varepsilon))<\lambda\right).
\]
If $\phi$ is then also uniformly consistent, we further have for any $\varepsilon,\lambda>0$ that
\[
\PP(\exists n\geq \rho'(\lambda,\kappa(\varepsilon))(d(x_n,z)\geq \varepsilon))\leq \PP(\exists n\geq \rho'(\lambda,\kappa(\varepsilon))(\phi(x_n,z)\geq\kappa(\varepsilon)))<\lambda
\]
so that $\rho'(\lambda,\kappa(\varepsilon))$ is a rate for $d(x_n,z)\to 0$.
\end{proof}

In analogy to Corollary \ref{cor:qualCorMain}, we also highlight the corresponding qualitative theorem obtained from the above result by disregarding the quantitative information:

\begin{corollary}\label{cor:qualCorFejer}
Given a metric space $(X,d)$ and functions $\phi:X\times X\to [0,\infty)$ and $F:X\to [0,\infty]$, the former being measurable in its first argument and the latter being measurable, suppose that $F$ has a unique zero $z$ which is strongly $\phi$-unique in expectation over a collection $D$ of $X$-valued random variables. Suppose that $(x_n)\subseteq D$ is a sequence of $X$-valued random variables which is stochastically $\phi$-quasi-Fej\'er monotone w.r.t.\ $\mathrm{zer}F$ (and a suitable filtration) and that it has the $\liminf$-property in expectation relative to $F$. Then $\EE[\phi(x_n,z)]\to 0$ and $\phi(x_n,z)\to 0$ a.s. If $\phi$ is uniformly consistent, then $d(x_n,z)\to 0$ a.s.
\end{corollary}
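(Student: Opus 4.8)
The plan is to obtain this theorem as a direct instance of Theorem~\ref{thm:RatesGeneral}. First I would set $X_n := \phi(x_n,z)$, $A_n := \zeta_n$ and $C_n := \xi_n$, and take $f := \mathrm{id}$, which is s.i.c.c.\ with moduli $\psi(a) = a$ and (modulus of continuity at $0$) $\kappa_f(\varepsilon) = \varepsilon$, this being the case $q = 1$ of Example~\ref{ex:cisc}(1). Since each $x_n$ is $\mathcal{F}_n$-measurable and $\phi$ is measurable, $(X_n)$ is adapted to $(\mathcal{F}_n)$, and integrability of $\phi(x_n,z)$ is part of the hypotheses; the stochastic $\phi$-quasi-Fej\'er monotonicity of $(x_n)$ w.r.t.\ $\mathrm{zer}F = \{z\}$ is exactly the almost-supermartingale inequality $\EE[X_{n+1}\mid\mathcal{F}_n]\le (1+A_n)X_n + C_n$ a.s., and the assumptions $\prod_{n}(1+\zeta_n) < K$ a.s.\ and $\sum_{n\ge\chi(\varepsilon)}\EE[\xi_n]<\varepsilon$ supply precisely the constant $K$ and the error modulus $\chi$ demanded by Theorem~\ref{thm:RatesGeneral}.

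The one ingredient still to be produced is the $\liminf$-modulus for $(f(X_n)) = (\phi(x_n,z))$ in expectation, and this is handled by the composition recipe recorded immediately after Theorem~\ref{thm:RatesGeneral}. Concretely, $\varphi$ is a $\liminf$-bound in expectation for $(x_n)$ relative to $F$, and since $(x_n)\subseteq D$, the modulus of strong $\phi$-uniqueness $\tau$ gives $\EE[F(x_n)] < \tau(\varepsilon) \Rightarrow \EE[\phi(x_n,z)] < \varepsilon$ for every $n$; hence $(\varepsilon,N)\mapsto \varphi(\tau(\varepsilon),N)$ witnesses $\liminf_n \EE[\phi(x_n,z)] = 0$ and serves as the $\liminf$-modulus required by Theorem~\ref{thm:RatesGeneral}.

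With every hypothesis of Theorem~\ref{thm:RatesGeneral} verified, applying it yields $\EE[\phi(x_n,z)]\to 0$ and $\phi(x_n,z)\to 0$ a.s., with the rates $\rho$ and $\rho'$ furnished there. It then only remains to substitute $f = \mathrm{id}$, so that $\psi(K^{-1}) = K^{-1}$ and $f(\varepsilon) = \varepsilon$, into those formulas and simplify, which produces exactly $\rho(\varepsilon) = \varphi(\tau(\varepsilon/2K),\chi(\varepsilon/2K))$ and $\rho'(\lambda,\varepsilon) = \rho(\lambda\varepsilon)$; in particular the a.s.\ rate means $\PP(\exists n\ge\rho'(\lambda,\varepsilon)(\phi(x_n,z)\ge\varepsilon)) < \lambda$. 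For the final metric statement, I would invoke the contrapositive of the defining property of the uniform-consistency modulus $\kappa$, namely $d(x,y)\ge\varepsilon \Rightarrow \phi(x,y)\ge\kappa(\varepsilon)$, to conclude $\PP(\exists n\ge\rho'(\lambda,\kappa(\varepsilon))(d(x_n,z)\ge\varepsilon)) \le \PP(\exists n\ge\rho'(\lambda,\kappa(\varepsilon))(\phi(x_n,z)\ge\kappa(\varepsilon))) < \lambda$, so that $\rho'(\lambda,\kappa(\varepsilon))$ is the claimed a.s.\ rate for $d(x_n,z)\to 0$.

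I do not expect a genuine obstacle here: all the analytic content — the supermartingale construction, Ville's inequality, the Jensen/concavity manipulations — is already discharged inside Theorem~\ref{thm:RatesGeneral}. The only points that need care are bookkeeping: confirming that $\mathrm{id}$ is s.i.c.c.\ with the stated moduli, keeping the (trivial) modulus of continuity at $0$ of $f$ notationally distinct from the uniform-consistency modulus $\kappa$ of $\phi$, and carrying out the modulus composition and the subsequent simplification accurately so that the resulting $\rho$ and $\rho'$ coincide with the displayed expressions.
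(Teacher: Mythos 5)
Your argument is correct and is essentially the paper's own route: it reproduces the proof of Theorem~\ref{thm:Fejer} (instantiate Theorem~\ref{thm:RatesGeneral} with $X_n:=\phi(x_n,z)$, $A_n:=\zeta_n$, $C_n:=\xi_n$, $f:=\mathrm{id}$, and the composed $\liminf$-modulus $\varphi(\tau(\cdot),\cdot)$), from which the corollary follows by discarding the quantitative data. The only bookkeeping point worth making explicit for the purely qualitative statement is that the qualitative hypotheses merely guarantee the \emph{existence} of the moduli $\varphi$, $\tau$, $K$ and $\chi$ (e.g.\ a $\liminf$-bound $\varphi$ extracted from $\liminf_{n\to\infty}\EE[F(x_n)]=0$), which is exactly what the paper leaves implicit.
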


Just as with our quantitative Robbins-Siegmund theorem, refined results providing faster rates of convergence for stochastically Fej\'er monotone sequences under stronger assumptions are easy to produce. To give one example, the following result in the context of linear moduli of strong uniqueness is an immediate corollary of Theorem \ref{thm:RSfast} (and we omit the proof): 

\begin{theorem}\label{thm:fejerFast}
Let $(X,d)$ be a metric space and let $\phi:X\times X\to [0,\infty)$ be a mapping which is measurable in its first argument. Let $F:X\to [0,\infty]$ be measurable with $\mathrm{zer}F=\{z\}$, where the zero $z$ is strongly $\phi$-unique in expectation over a collection $D$ of $X$-valued random variables where further $\EE[F(x)]\geq t\EE[\phi(x,z)]$ for all $x\in D$ with $t>0$. Let $(\mathcal{F}_n)$ be a filtration and let $(x_n)\subseteq D$ be a sequence of $X$-valued $\mathcal{F}_n$-measurable random variables such that $\phi(x_n,z)$ is integrable for all $n\in\mathbb{N}$. Suppose further that $(x_n)$ satisfies
\[
\EE[\phi(x_{n+1},z)\mid\mathcal{F}_n]\leq (1+\zeta_n)\phi(x_n,z)-\eta_n F(x_n)+\xi_n\text{ a.s.}
\]
for all $n\in\mathbb{N}$, where $(\zeta_n)$, $(\eta_n)$ are sequences of nonnegative reals and $(\xi_n)$ is a sequence of integrable real-valued random variables such that 
\[
\EE[\xi_n]\leq d/(n+r)^2\ \text{ and }\ \zeta_n+c/(n+r)\leq t\eta_n
\]
for all $n\in\mathbb{N}$ where $c>1$, $d\geq 0$ and $r\in\mathbb{N}\setminus\{0\}$. Finally suppose that $K\geq 1$ and $L>0$ are such that $\prod_{i=0}^\infty (1+\zeta_i)<K$ and $L\geq \EE[\phi(x_0,z)]$. Then
\[
\EE[\phi(x_n,z)]\leq\frac{u}{n+r} \text{ and }\PP\left(\exists m\geq n(\phi(x_m,z)\geq \varepsilon)\right)\leq \frac{1}{\varepsilon}\cdot\frac{K(u+2d)}{n+r}
\]
for all $n\in\mathbb{N}$ and $\varepsilon>0$, where $u\geq \max\left\{d/(c-1),rL\right\}$.
\end{theorem}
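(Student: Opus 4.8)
The plan is to follow the template of Theorem \ref{thm:RSfast}, now with $\phi(x_n,z)$ in the role of $X_n$ and $F(x_n)$ in the role of $V_n$. First I would set $X_n:=\phi(x_n,z)$, which by hypothesis is integrable and adapted to $(\mathcal{F}_n)$, and take expectations in the relativized stochastic quasi-Fej\'er inequality to obtain $\EE[X_{n+1}]\leq (1+\zeta_n)\EE[X_n]-\eta_n\EE[F(x_n)]+\EE[\xi_n]$. Since $(x_n)\subseteq D$ and $z$ satisfies $\EE[F(x)]\geq t\EE[\phi(x,z)]$ for all $x\in D$, we have $\EE[F(x_n)]\geq t\EE[X_n]$; combining this with the parameter assumption $\zeta_n+\frac{c}{n+r}\leq t\eta_n$ (hence $\zeta_n-t\eta_n\leq -\frac{c}{n+r}$) and with $\EE[\xi_n]\leq \frac{d}{(n+r)^2}$ yields the deterministic recursion
\[
\EE[X_{n+1}]\leq \left(1-\frac{c}{n+r}\right)\EE[X_n]+\frac{d}{(n+r)^2}.
\]
Applying Lemma \ref{lem:ssLikeForQM} to the nonnegative reals $(\EE[X_n])$, and noting $\EE[X_0]=\EE[\phi(x_0,z)]\leq L$ so that $r\EE[X_0]\leq rL$, gives $\EE[\phi(x_n,z)]\leq \frac{u}{n+r}$ for every $u\geq \max\{d/(c-1),rL\}$, which is the first claim.

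For the almost sure tail bound, I would reuse the supermartingale construction from the proof of Theorem \ref{thm:RatesGeneral}. Dropping the nonnegative term $\eta_nF(x_n)$ from the conditional inequality gives $\EE[X_{n+1}\mid\mathcal{F}_n]\leq (1+\zeta_n)X_n+\xi_n$ a.s., so with $b_j:=\prod_{i=0}^j(1+\zeta_i)$ and $b_{-1}:=1$ the process
\[
U_n:=\frac{X_n}{b_{n-1}}+\EE\left[\sum_{i=n}^\infty\frac{\xi_i}{b_i}\mid\mathcal{F}_n\right]
\]
is a nonnegative supermartingale, exactly as before (here the integrability of $(\xi_n)$ and of $\phi(x_n,z)$ is what makes the defining conditional expectations finite). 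Using $b_j\geq 1$, the bound $\EE[X_n]\leq u/(n+r)$ just established, and the telescoping estimate $\EE[\xi_i]\leq \frac{d}{(i+r)^2}\leq 2d\left(\frac{1}{i+r}-\frac{1}{i+r+1}\right)$, one obtains $\EE[U_n]\leq \EE[X_n]+\sum_{i=n}^\infty\EE[\xi_i]\leq \frac{u+2d}{n+r}$. Since $b_{m-1}<K$ for all $m$, we have $X_m/K<X_m/b_{m-1}\leq U_m$, hence $\PP(\exists m\geq n(X_m\geq\varepsilon))\leq \PP(\exists m\geq n(U_m\geq\varepsilon/K))$; applying Ville's inequality (Lemma \ref{ville}) to the shifted supermartingale $(U_{n+k})_{k\in\NN}$ then gives $\PP(\exists m\geq n(U_m\geq\varepsilon/K))\leq \frac{K\EE[U_n]}{\varepsilon}\leq \frac{K(u+2d)}{\varepsilon(n+r)}$, which is the second claim.

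Since the proof is essentially a transcription of Theorems \ref{thm:RSfast} and \ref{thm:RatesGeneral}, I do not expect a substantive obstacle. The two points that need care are, first, checking that $\EE[F(x_n)]\geq t\EE[\phi(x_n,z)]$ may legitimately be used --- this is immediate from $(x_n)\subseteq D$ and the stated hypothesis on the zero $z$ --- and, second, that Ville's inequality must be applied to the supermartingale shifted to start at index $n$ rather than at $0$, so that the tail probability is controlled by $\EE[U_n]$ and hence decays like $(n+r)^{-1}$; the bookkeeping in the telescoping bound for $\sum_{i\geq n}\EE[\xi_i]$ is the only slightly fiddly computation.
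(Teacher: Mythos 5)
Your proposal is correct and matches the paper's intent exactly: the paper omits the proof, noting only that the result is an immediate corollary of Theorem \ref{thm:RSfast} under the instantiation $X_n:=\phi(x_n,z)$, $V_n:=F(x_n)$, $C_n:=\xi_n$, which is precisely the reduction you carry out (and your careful points about using $(x_n)\subseteq D$ and applying Ville's inequality to the shifted supermartingale are exactly the steps already handled in the proofs of Theorems \ref{thm:RSfast} and \ref{thm:RatesGeneral}).
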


We end this section by observing the following result derived from the previous Proposition \ref{pro:AStoEconvex}, which allows us to recognize suitable moduli witnessing a type of pointwise strong uniqueness as moduli of strong uniqueness in mean, which is particularly useful for deriving concrete examples of such moduli.

\begin{proposition}\label{pro:FejModUniqConv}
Let $D$ be a collection of $X$-valued random variables. Let $\phi:X\times X\to [0,\infty)$ be a mapping which is measurable in its first argument and let $F:X\to [0,\infty]$ be measurable with $z\in\mathrm{zer}F$ and such that $\phi(x,z)$ is integrable and
\[
F(x)\geq \tau(\phi(x,z))\text{ a.s.}
\]
for all $x\in D$, where $\tau:[0,\infty)\to [0,\infty)$ is a convex and strictly increasing function. Then $z$ is strongly $\phi$-unique in expectation over $D$ and $\tau$ is a modulus. 
\end{proposition}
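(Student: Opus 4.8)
The plan is to obtain the statement as a direct instantiation of Proposition \ref{pro:AStoEconvex}, exactly in the spirit of Remark \ref{rem:ModOfUniqEquiv}. I would take the non-empty index set to be $I := D$ and, for each $x \in D$, put $X_x := \phi(x,z)$ and $V_x := F(x)$. By hypothesis each $\phi(x,z)$ is a nonnegative integrable real-valued random variable, the assumed inequality $F(x) \geq \tau(\phi(x,z))$ a.s.\ is precisely the requirement $V_x \geq \tau(X_x)$ a.s., and the hypotheses on $\tau$ (convex, strictly increasing, $[0,\infty)\to[0,\infty)$) are carried over verbatim. Proposition \ref{pro:AStoEconvex} then delivers
\[
\forall x \in D\ \forall \varepsilon > 0\left( \EE[F(x)] < \tau(\varepsilon) \to \EE[\phi(x,z)] < \varepsilon \right),
\]
which is exactly the defining clause for $z$ being strongly $\phi$-unique in expectation over $D$ with modulus $\tau$. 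To match the definition's requirement that a modulus map $(0,\infty)$ into $(0,\infty)$, I would note that a strictly increasing $\tau : [0,\infty) \to [0,\infty)$ satisfies $\tau(\varepsilon) > \tau(0) \geq 0$ for every $\varepsilon > 0$, so its restriction to $(0,\infty)$ is admissible.

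If one prefers a self-contained argument rather than a citation, the short proof of Proposition \ref{pro:AStoEconvex} can simply be reproduced: for $x \in D$ and $\varepsilon > 0$ with $\EE[F(x)] < \tau(\varepsilon)$, monotonicity of expectation together with $F(x) \geq \tau(\phi(x,z)) \geq 0$ a.s.\ gives $\EE[\tau(\phi(x,z))] \leq \EE[F(x)] < \tau(\varepsilon)$; Jensen's inequality (Lemma \ref{jensen}, applicable since $\tau$ is convex and $\phi(x,z)$ is nonnegative and integrable) yields $\tau(\EE[\phi(x,z)]) \leq \EE[\tau(\phi(x,z))] < \tau(\varepsilon)$; and strict monotonicity of $\tau$ finishes with $\EE[\phi(x,z)] < \varepsilon$.

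The one place deserving a word of care is that $F$ is allowed to take the value $+\infty$, so $F(x)$ is a priori only extended-real-valued, whereas Proposition \ref{pro:AStoEconvex} is literally phrased for real-valued random variables. This is harmless: if $\EE[F(x)] = \infty$ the implication is vacuous (since $\tau(\varepsilon) < \infty$ always), and if $\EE[F(x)] < \infty$ then $F(x) < \infty$ a.s., so $F(x)$ coincides a.s.\ with a genuine nonnegative real-valued random variable and either argument above goes through unchanged. Beyond this bookkeeping point I expect no real obstacle; the proposition is a pure instantiation of the earlier regularity lemma, which is precisely why the authors feel free to omit it.
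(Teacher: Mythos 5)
Your proof is correct and matches the paper's intended argument exactly: the authors explicitly state that the result is an instantiation of Proposition \ref{pro:AStoEconvex} with $I:=D$, $X_x:=\phi(x,z)$ and $V_x:=F(x)$, and omit the proof for that reason. Your additional remarks on restricting $\tau$ to $(0,\infty)$ and on handling the case $\EE[F(x)]=\infty$ are sensible bookkeeping that the paper glosses over.
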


In both cases, note that the above requirement that $F(x)\geq \tau(\phi(x,z))$ a.s.\ for all $x\in D$ is in particular true if $F(x)\geq \tau(\phi(x,z))$ for all $x\in X$.

\section{Stochastic approximation}\label{sec:RM}

We conclude with a simple example which illustrates how our abstract results may apply in practice. We deliberately focus on a widely known method, the Robbins-Monro procedure, which while of central importance in modern statistics, is intended to be merely illustrative. Indeed, as highlighted in the introduction, many further applications of our results have already been derived (we refer again to \cite{Pischke2025b,Pischke2026} and \cite{PischkePowell2026}), and many further potential applications are easy to identify. In particular, in the context of the previous section, concrete scenarios that can be formulated as root finding problems $F(z)=0$ whose solution is strongly unique in expectation include fixed-point problems for quasi-contractions, minimization problems for uniformly quasi-convex functions, and set-valued inclusion problems for uniformly accretive or monotone operators. In all these cases, a corresponding regularity modulus $\tau$ concretely arises from a simple pointwise condition of the form $F(x)\geq \tau(\phi(x,z))$, which can be converted to strong uniqueness in mean via Proposition \ref{pro:FejModUniqConv} above. The reader is directed to \cite{PischkePowell2026} for more explicit and extensive discussions in this vein, on both notions of regularity and the many concrete forms they take, along with the variety of different methods that are encompassed by the corresponding notion of stochastic quasi-Fej\'er monotonicity, as in particular illustrated by the applications presented therein. 

Background on the Robbins-Monro scheme and the many well-known algorithms connected to it can be found in any standard reference for stochastic approximation (e.g.\ \cite{Duflo97,Kushner2003,Lai2003}). Here we consider a general scheme in Hilbert spaces, motivated by the early works \cite{Revesz73,Salov80,Yin90} on the Robbins-Monro method in such contexts, that matches our level of abstraction, and in line with our overall approach, derive for it a quantitative convergence theorem with the corresponding rate formulated in terms of abstract moduli. While we later rederive standard convergence rates, our general convergence result is new at this level of abstraction, and covers a broad family of stochastic approximation algorithms.

Now, letting $H$ be an arbitrary separable Hilbert space, let $(x_n)$ be the sequence of $H$-valued random variables recursively defined by
\[
x_{n+1}:=x_n-a_ny_n \label{RM}\tag{RM}
\]
from some initial $x_0\in H$, where $(y_n)$ another sequence of $H$-valued random variables and $(a_n)$ a sequence of nonnegative reals. The classic Robbins-Monro scheme for finding the root of a function $M:\RR\to \RR$ is then an instance of $\eqref{RM}$ for $H=\RR$ and $y_n=M(x_n)+\varepsilon_n$ for some noise terms $(\varepsilon_n)$.

Any one of our abstract convergence frameworks, that is Robbins-Siegmund, Dvoretzky or Fej\'er monotonicity, could now be used to set up an abstract quantitative convergence result for \eqref{RM}. We choose the more abstract setting of the first. In the resulting Theorem \ref{res:RM} given below, conditions (i) -- (iii) in particular abstract standard assumptions (similar as e.g.\ done in \cite{Salov80}) that would normally be imposed on the objective function $M$ and the stochastic noise $(\varepsilon_n)$, with (iii) in particular implicitly representing a type of regularity assumption, while (iv) represents a quantitative rendering of the classic conditions $\sum_{n=0}^\infty a_n=\infty$ and $\sum_{n=0}^\infty a_n^2<\infty$. For details on how standard concepts like conditional expectations extend to Hilbert spaces, the reader is directed to \cite{ledoux:91:banach}.

\begin{theorem}\label{res:RM}
Suppose that $(x_n)$, $(y_n)$ and $(a_n)$ satisfy \eqref{RM}. Let $z\in H$ and suppose that $L>0$ satisfies $\norm{x_0-z}^2<L$. Define $\mathcal{F}_n:=\sigma(x_0,y_0,\dots,x_{n-1},y_{n-1})$, i.e.\ $\mathcal{F}_n$ is the $\sigma$-algebra generated by $x_0,y_0,\dots,x_{n-1},y_{n-1}$, and suppose that the following conditions are satisfied:
\begin{enumerate}
\item[(i)] there exist $c,d>0$ and a sequence of nonnegative random variables $(d_n)$ satisfying $\sup_{n\in\NN}\EE[d_n]\leq d$ such that 
\[
\EE[\norm{y_n}^2\mid\mathcal{F}_n]\leq c\norm{x_n-z}^2+d_n\text{ a.s.}
\]
for all $n\in\NN$;
\item[(ii)] $\langle x_n-z,\EE[y_n\mid\mathcal{F}_n]\rangle \geq 0$ for all $n\in\NN$;
\item[(iii)] there exists a function $\tau:(0,\infty)\to (0,\infty)$ such that 
\[
\EE[\langle x_n-z,\EE[y_n\mid\mathcal{F}_n]\rangle]<\tau(\varepsilon)\to \EE[f(\norm{x_n-z}^2)]<\varepsilon
\]
for all $\varepsilon>0$ and $n\in\NN$, where $f$ is a s.i.c.c.\ function with moduli $\psi$ and $\kappa$;
\item[(iv)] $\sum_{n=0}^\infty a_n=\infty$ with rate of divergence $\theta$ and $\sum_{n=0}^\infty a_n^2<M$ with rate of convergence $\chi$, i.e.\ $\sum_{n=k}^{\theta(k,b)}a_n\geq b$ and $\sum_{n=\chi(\varepsilon)}^\infty a_n^2< \varepsilon$ for any $\varepsilon,b>0$ and any $k\in\mathbb{N}$.
\end{enumerate}
Then $\EE[f(\norm{x_n-z}^2)]\to 0$ with rate 
\[
\rho(\varepsilon):=\theta\left(\chi\left(\frac{\kappa\left(K_1\varepsilon\right)}{d}\right),\frac{K_2}{\tau(K_1\varepsilon)}\right)
\]
for $K_1:=\frac{1}{2}\psi\left(e^{-cM}\right)$ and $K_2:=e^{cM}(L+dM)$ as well as $x_n\to z$ a.s.\ with rate $\rho'(\lambda,\varepsilon):=\rho(\lambda f(\varepsilon^2))$.
\end{theorem}

\begin{proof}
We first observe that $x_n$ is $\mathcal{F}_n$ measurable for all $n\in\NN$, and thus
\begin{align*}
\EE[\norm{x_{n+1}-z}^2\mid\mathcal{F}_{n}]&=\EE[\norm{x_n-a_ny_n-z}^2\mid \mathcal{F}_n]\\
&=\EE[\norm{x_n-z}^2\mid\mathcal{F}_n]-2a_n\EE[\langle x_n-z,y_n\rangle\mid \mathcal{F}_n]+a_n^2\EE[\norm{y_n}^2\mid\mathcal{F}_n]\\
&=\norm{x_n-z}^2-2a_n\langle x_n-z,\EE[y_n\mid \mathcal{F}_n]\rangle+a_n^2\EE[\norm{y_n}^2\mid\mathcal{F}_n]
\end{align*}
almost surely for any $n\in\mathbb{N}$. Using (i), we therefore have
\[
\EE[\norm{x_{n+1}-z}^2\mid\mathcal{F}_{n}]\leq(1+ca_n^2)\norm{x_n-z}^2-2a_n\langle x_n-z,\EE[y_n\mid\mathcal{F}_n]\rangle+a_n^2d_n\text{ a.s.}
\]
for any $n\in\mathbb{N}$. We now apply Theorem \ref{thm:RS} with $X_n:=\norm{x_{n}-z}^2$ as well as $V_n:=\langle x_n-z,\EE[y_n\mid\mathcal{F}_n]\rangle$, by which the given rates follow. For that, we simply note that nonnegativity of $V_n$ follows from (ii) and we have $\prod_{i=0}^\infty (1+ca_n^2)< e^{cM}$ as well as $\sum_{n=k}^{\theta(k,b/2)}2a_n\geq 2\sum_{n=k}^{\theta(k,b/2)}a_n \geq b$ and $\sum_{n=\chi(\varepsilon/d)}^\infty \EE[a_n^2d_n]\leq d\sum_{n=\chi(\varepsilon/d)}^\infty a_n^2<\varepsilon$.
\end{proof}

Setting $f=\sqrt{\cdot}$ gives the following corollary illustrating convergence in mean:

\begin{corollary}\label{res:RM:f:is:id}
Assuming that condition (iii) of Theorem \ref{res:RM} is simplified to
\[
\EE[\langle x_n-z,\EE[y_n\mid\mathcal{F}_n]\rangle]<\tau(\varepsilon)\to \EE[\norm{x_n-z}]<\varepsilon,
\]
the conclusions of Theorem \ref{res:RM} simplify to $\EE[\norm{x_n-z}]\to 0$ with rate 
\[
\rho(\varepsilon):=\theta\left(\chi\left(\frac{K^2_1\varepsilon^2}{d}\right),\frac{K_2}{\tau(K_1\varepsilon)}\right)
\]
for $K_1:=\frac{1}{2}\sqrt{e^{-cM}}$, $K_2:=e^{cM}(L+dM)$ as well as $x_n\to z$ a.s.\ with rate $\rho'(\lambda,\varepsilon):=\rho(\lambda \varepsilon)$.
\end{corollary}

For an example of how conditions (i)--(iii) are naturally satisfied at this level of abstraction, let $M:H\to 2^H$ be a set-valued operator, and $z$ a solution to the inclusion problem $0\in M(z)$. Let $y_n$ be an estimator for an element of $M(x_n)$, which we represent abstractly via the condition
\[
\EE[y_n\mid\mathcal{F}_n]\in M(x_n).
\]
If $M$ is monotone, then condition (ii) is immediately satisfied. If $M$ is further $\tau$-uniformly monotone at $z$, i.e.\ $M$ satisfies\footnote{This assumption is a special case of the notion of uniform monotonicity (see e.g.\ \cite{BC2017}) restricted to a particular zero at hand. A similar such assumption is already studied in \cite{KohlenbachKoutsoukouArgyraki2015}.}
\[
\langle x-z,u\rangle\geq \tau(\norm{x-z}) \text{ for all } (x,u)\in M,
\]
with a corresponding modulus $\tau:[0,\infty)\to [0,\infty)$, which is convex, strictly increasing and satisfies $\tau(0)=0$, then Proposition \ref{pro:AStoEconvex} further guarantees our implicit regularity property (iii) (in the simplified form of Corollary \ref{res:RM:f:is:id}). Further, condition (i) is naturally satisfied under a standard linear boundedness condition on $M$ (see already \cite{Revesz73} as well as \cite{Lai2003,Yin90}), that is if we assume that 
\[
\norm{y}\leq c(\norm{x-z}+1)\text{ for all }x\in H\text{ and all }y\in M(x).
\]
Therefore, in the context of a parameter sequence $(a_n)$ satisfying condition (iv), both $\EE[\norm{x_n-z}]\to 0$ and $x_n\to z$ a.s.\ follow from Corollary \ref{res:RM:f:is:id} with corresponding rates as indicated. 

For the special, but still widely studied, particular case that $M$ is $\beta$-strongly monotone in the sense that 
\[
\langle x-y,u-v\rangle\geq \beta \norm{x-y}^2\text{ for all }(x,u),(y,v)\in M,
\]
a simple adaptation of the proof of Theorem \ref{res:RM} puts us in the scope of fast rates as in Theorem \ref{thm:RSfast}: As in the proof of Theorem \ref{res:RM}, we have
\[
\EE[X_{n+1}\mid\mathcal{F}_{n}]\leq (1+ca_n^2)X_n-2a_nV_n+a_n^2d_n\text{ a.s.}
\]
for all $n\in\mathbb{N}$, where $X_n:=\norm{x_n-z}^2$ and $V_n:=\langle x_n-z,\EE[y_n\mid\mathcal{F}_n]\rangle$, but now, using the $\beta$-strong monotonicity of $M$, we can derive the stronger (linear) relation that $V_n\geq \beta X_n$ holds almost surely, and so in particular that $\EE[V_n]\geq \beta\EE[X_n]$ for all $n\in\mathbb{N}$. Defining
\[
a_n:=\frac{1}{\beta(n+r)} \ \ \ \mbox{for} \ \ \  r\geq \frac{2c}{\beta^2},
\]
it is easy to show that $\EE\left[a_n^2d_n\right]\leq d/\beta^2(n+r)^2$ and $ca_n^2+3/2(n+r)\leq 2a_n\beta$ and so Theorem \ref{thm:RSfast} applies, yielding in particular
\[
\EE\left[\norm{x_n-z}^2\right]\leq \frac{u}{n+r}
\]
for all $n\in\mathbb{N}$ where $u\geq \max\left\{2d/\beta,rL\right\}$. In this way, we rederive the well-known asymptotic estimate $\EE[\norm{x_n-z}]=\mathcal{O}(1/\sqrt{n})$ for Robbins-Monro schemes in such contexts.

These results then immediately apply to stochastic subgradient methods for suitable convex functions on $H$. Concretely, whenever $f:H\to \RR$ is a $\tau$-uniformly convex function, i.e.
\[
f(\lambda x+(1-\lambda )y)\leq \lambda f(x)+(1-\lambda)f(y)-\lambda(1-\lambda)\tau(\norm{x-y})
\]
for all $x,y\in H$ and $\lambda\in [0,1]$. In that case, it is well-known (see e.g.\ \cite{BC2017}) that the subderivative $\partial f:H\to 2^H$ of $f$ is $2\tau$-uniformly monotone, and so the corresponding stochastic subgradient method
\[
x_{n+1}:=x_n-a_ny_n \ \ \ \mbox{for} \ \ \ \EE[y_n\mid\mathcal{F}_n]\in\partial f(x_n)\label{SG}\tag{SG}
\]
reduces as usual to \eqref{RM} for $M=\partial f$, converging in mean and almost surely to the minimizer $z$ of $f$, with corresponding effective rates, under the assumption of linear boundedness for the subgradient. Analogous fast rates apply in the special case that $f$ is $\beta$-strongly convex can be derived from the above as well.

\begin{remark}\label{rem:blum}
Alternative regularity conditions commonly encountered in stochastic approximation, such as variants of Blum's condition \cite{blum:54:stochastic}, can similarly be approached using the present abstract framework. In the context of the simple scheme 
\[
x_{n+1}:=x_n-a_n(M(x_n)+\varepsilon_n)
\]
for an operator $M:\mathbb{R}\to\mathbb{R}$, Blum's takes the following form (cf.\ \cite{Lai2003}): For all $0<\varepsilon<1$ there exists $\delta>0$ such that
\[
\inf_{\varepsilon\leq |x-z|\leq\varepsilon^{-1}}\left\{M(x)(x-z)\right\}\geq \delta. \mbox{ a.s.}.
\]
A modulus of uniqueness $\tau$ in our sense can be constructed from a function $\delta(\varepsilon)$ that witnesses the above property under suitable additional conditions, in particular using a quantitative rendering of an associated uniform integrability condition as introduced in \cite{PischkePowell2024}. We refer to the arXiv version \cite{NeriPischkePowell2025b} (cf.\ Section 6.3) of a previous, more extensive variant of the present paper for a more detailed discussion of such results.
\end{remark}

\noindent
{\bf Acknowledgments:} The authors want to thank Miroslav Ba\v{c}\'ak and Paulo Oliva for helpful discussions on the topic of this paper. The first author was partially supported by the EPSRC Centre for Doctoral Training in Digital Entertainment EP/L016540/1, and the third author was partially supported by the EPSRC grant EP/W035847/1.

\bibliographystyle{plain}
\bibliography{ref}

\end{document}